\newcommand{\cA}{\ensuremath{\mathcal A}}
\newcommand{\cB}{\ensuremath{\mathcal B}}
\newcommand{\cI}{\ensuremath{\mathcal I}}
\newcommand{\cM}{\ensuremath{\mathcal M}}
\newcommand{\cS}{\ensuremath{\mathcal S}}
\newcommand{\GG}{\ensuremath{\mathbb G}}
\newcommand{\eps}{\varepsilon}
\renewcommand{\phi}{\varphi}
\renewcommand{\rho}{\varrho}
\DeclareMathOperator*{\argmax}{arg\,max}
\DeclareMathOperator*{\E}{\mathbb{E}}
\DeclareMathOperator*{\N}{\mathbb{N}}
\DeclareMathOperator*{\R}{\mathbb{R}}
\newcommand{\conv}{\ensuremath{\mathrm{conv}}}
\newcommand{\balpha}{\ensuremath{\bm{\alpha}}}
\newcommand*{\bfm}[1]{\ensuremath{\mathbf{#1}}}
\let\setminus=\smallsetminus
\newcommand{\Gnp}{G(n, \balpha, P)}
\newcommand{\osref}[2]{%
  \setlength\abovedisplayskip{5pt plus 2pt minus 2pt}
  \setlength\abovedisplayshortskip{5pt plus 2pt minus 2pt}
  \ensuremath{\overset{\text{#1}}{#2}}
}
\definecolor{mypurp}{rgb}{0.22, 0.1, 0.72}
\declaretheorem[parent=section]{theorem}
\declaretheorem[sibling=theorem]{lemma}
\declaretheorem[sibling=theorem]{proposition}
\declaretheorem[sibling=theorem]{claim}
\declaretheorem[sibling=theorem]{conjecture}
\setlist{itemsep=0.1em, topsep=0.1em, parsep=0.1em, partopsep=0.1em}
\colorlet{RoyalRed}{red!70!black}
\definecolor{RoyalBlue}{rgb}{0.25, 0.41, 0.88}
\definecolor{RoyalAzure}{rgb}{0.0, 0.22, 0.66}
\definecolor{RoyalBrown}{rgb}{0.41, 0.41, 0.1}
\newcommand{\calA}{\ensuremath{\mathcal A}}
\title{The Chromatic Number of Dense Random Block Graphs}
\author{Anders Martinsson}
\address{Anders Martinsson, Department of Computer Science, ETH Z\"{u}rich, 8092
Z\"{u}rich, Switzerland}
\email{anders.martinsson@inf.ethz.ch}
\author{Konstantinos Panagiotou}
\address{Konstantinos Panagiotou, Institute of Mathematics, University of
Munich, D-80333 Munich, Germany}
\email{kpanagio@math.lmu.de}
\author{Pascal Su}
\address{Pascal Su, Department of Computer Science, ETH Z\"{u}rich, 8092
Z\"{u}rich, Switzerland}
\email{sup@inf.ethz.ch}
\author{Milo\v{s} Truji\'c}
\address{Milo\v{s} Truji\'{c}, Department of Computer Science, ETH Z\"{u}rich,
8092 Z\"{u}rich, Switzerland}
\email{mtrujic@inf.ethz.ch}
\thanks{The research leading to these results has received funding from grant
no.\ 200021 169242 of the Swiss National Science Foundation (PS, MT) and from
the European Research Council, ERC Grant Agreement 772606–PTRCSP (KP). Part of
this work has been completed at a workshop of the research group of Angelika
Steger in Buchboden in July 2019}
\begin{document}

\begin{abstract}
  The chromatic number $\chi(G)$ of a graph $G$, that is, the smallest number of
  colors required to color the vertices of $G$ so that no two adjacent vertices
  are assigned the same color, is a classic and extensively studied parameter.
  Here we consider the case where $G$ is a random block graph, also known as the
  stochastic block model. The vertex set is partitioned into $k\in\mathbb{N}$
  parts $V_1, \dotsc, V_k$, and for each $1 \le i\le j\le k$, two vertices $u
  \in V_i, v\in V_j$ are connected by an edge with some probability $p_{ij} \in
  (0,1)$ independently. Our main result pins down the typical asymptotic value
  of $\chi(G)$ and establishes the distribution of the sizes of the color
  classes in optimal colorings. We discover that in contrast to the case of a
  binomial random graph $G(n,p)$, that corresponds to $k=1$ in our model, where
  the average size of a color class in an (almost) optimal coloring essentially
  coincides with the independence number, the block model reveals a more diverse
  picture: the ``average'' class in an optimal coloring is a convex combination
  of several types of independent sets that vary in total size as well as in the
  size of their intersection with each $V_i$, $1\le i \le k$.
\end{abstract}

\maketitle

\section{Introduction \& Main Result}

\subsection*{Chromatic Number of Random Graphs}

The chromatic number is a central and well-studied parameter in graph theory
with many applications in various other areas. For a graph $G$, the
\emph{chromatic number} $\chi(G)$ is defined as the smallest number of colors
required for coloring the vertices of $G$ such that no two adjacent vertices
are assigned the same color.

In this paper we consider the chromatic number in the case where the underlying
graph is random. The typical value and the distribution of $\chi(G(n,p))$,
where $G(n,p)$ is the binomial random graph with $n$ vertices and where each
edge is included independently with probability $p$, has been a topic of
intense study since the seminal works of Erd\H{o}s and
R\'{e}nyi~\cite{erdds1959random, erdos1960evolution} appeared. In a
breakthrough paper from 1978, Bollob\'{a}s~\cite{bollobas1988chromatic}
obtained the first asymptotically tight result: he established that for $p\in
(0,1)$, with high probability (w.h.p.), that is, with probability tending to 1
as $n \to\infty$,
\begin{equation}
\label{eq:chiGnp}
	\quad \chi(G(n,p)) = (1 + o(1)) \frac{n}{c(p) \ln n},
	\quad
	\text{where}
	\enspace
	c(p) = -\frac{2}{\ln(1-p)}.
\end{equation}
Actually, in~\cite{bollobas1988chromatic} much more was established. It has
long been known that w.h.p.\ the independence number $\alpha(G(n,p))$, the size
of the largest independent set in $G(n,p)$, equals $(1+o(1))c(p)\ln n$,
see~\cite{matula1976largest}. In addition to proving~\eqref{eq:chiGnp},
Bollob\'{a}s showed that one can color $G(n,p)$ almost optimally by covering
\emph{essentially all} of its vertices with independent sets that are roughly
of size $\alpha(G(n,p))$. More precisely, w.h.p.\ any (almost) optimal coloring
of $G(n,p)$ consists of color classes with asymptotic size $c(p)\ln n$ that
cover $n-o(n/\ln n)$ vertices.

The paper of Bollob\'{a}s initiated a long line of research concerned with
studying various properties of the distribution of the chromatic number. The
currently most accurate result on the asymptotic value of $\chi(G(n,p))$ for $p
\in (0,1)$ is due to Heckel~\cite{heckel2018chromatic}, who improved previous
results by several authors, e.g.~\cite{fountoulakis2008t, mcdiarmid1989method,
mcdiarmid1990chromatic, panagiotou2009note}, and where she showed upper and
lower bounds for $\chi(G(n,p))$ that are within $o(n/\ln^2n)$.

Apart from the probable asymptotic value of $\chi(G(n,p))$, other parameters of
it have been of considerable interest and difficulty. Most notably, the
question about the \emph{concentration} of $\chi(G(n,p))$, that is, the
smallest size of an interval in which $\chi(G(n,p))$ is located w.h.p.\ has
been a point of focus since the papers of Erd\H{o}s and R\'{e}nyi, see
also~\cite{bollobas2004sharp}. In a recent remarkable breakthrough,
Heckel~\cite{heckel2021non} and Heckel and Riordan~\cite{heckel2021does} showed
polynomial non-concentration bounds for $\chi(G(n,p))$, thus answering a
long-standing open question.

\subsection*{Stochastic Block Model}

In this paper we study the chromatic number of random graphs in the so-called
{\em stochastic block model} (also known as the \emph{planted partition model}).
The model is a generalization of $G(n,p)$ and is defined as follows. Given $k
\in \N$, let $P = (p_{ij})_{i, j \in [k]}$, where $[k] = \{1, \dots, k\}$, be a symmetric matrix with all
entries $p_{ij} \in (0, 1), i,j\in[k]$.  For brevity we sometimes write  $p_i$  for $p_{ii}$.
Moreover, let $\balpha = (\alpha_1, \dotsc,
\alpha_k)$ be a vector of (1-)norm $|\balpha| = 1$ and with all $\alpha_i \in
(0, 1]$. For an integer $n \in \N$ we let $G(n, \balpha, P)$ be a random graph
$(V, E)$ obtained as follows. The vertex set $V = V_1 \cup \dotsb \cup V_k$
consists of $k$ disjoint parts such that $|V_i| = \lfloor \alpha_i n \rfloor$ for every $i \in
[k-1]$ and $\sum_{i \in [k]} |V_i| = n$. In the seuqel we will ignore rounding issues, since they have no effect on our calculations; we thus assume  that $|V_i| = \alpha_i n, i\in[k]$. Furthermore, for $i, j \in [k]$, two
distinct vertices $u \in V_i$ and $v \in V_j$ form an edge $uv \in E$ with
probability $p_{ij}$ independently. Throughout the paper we think of $k \geq 1$
as a fixed integer, i.e.\ the number of parts $V_i$ is fixed and independent of
$n$, and $P$ as a fixed matrix, that is, we only consider (dense) graphs with
w.h.p.\ $\Omega(n^2)$ edges. We call $G \sim \Gnp$ a {\em random block graph}.
Clearly, such a graph model is a direct generalization of the
Erd\H{o}s-R\'{e}nyi binomial random graph $G(n, p)$, which is obtained by
choosing $k=1$ and $p_1 = p$.

The stochastic block model is rather flexible and it enables us to describe a
variety of situations; it is very much interconnected with the clustering
problem, where we want to partition the vertices of a given graph into
``strongly connected'' parts with ``weak'' inter-class interactions. It is thus
no surprise that it appears as a natural model in several contexts, for example
in statistics, machine learning, physics, and computer science. Its applications
range from social networks to image processing and to genetics, see
e.g.~\cite{Newman2566,Pritchard945,JM00} for some influential papers in this
context, and various properties of the model have been studied in
physics~\cite{pnassbm, holland1983stochastic} and mathematics and computer
science~\cite{boppana1987eigenvalues, bui1987graph, acopartitioning,
jersorpatitioning, recostrunction}. For further history, reference, and
discussion, we refer to the amazing survey~\cite{abbe2017community}.

Recently there have been several papers that establish generalizations of
well-known results about properties of the binomial random graph in the more
general stochastic block model. For instance,
Hamiltonicity~\cite{anastos2019hamiltonicity} and the size of the largest
independent set/clique~\cite{dolevzal2017cliques} (in a richer model that we
will also discuss). Our focus here is to determine the asymptotic value of the
chromatic number of random block graphs. As we will see shortly, a direct
consequence of the methods we employ are precise bounds on the types (sizes) of
independent sets these random graphs have.

\subsection{Main results.}

As we previously mentioned, an (almost) optimal coloring of the binomial random
graph $G(n,p)$ for $p\in (0,1)$ has typically a rather simple structure and can
be constructed \emph{greedily} in the following sense: almost all $n$ vertices
are covered by independent sets that are of nearly maximum size, that is, of
size roughly $c(p)\ln n$, where $c(p) = -2/\ln(1-p)$ is defined
in~\eqref{eq:chiGnp}. As we shall see, the structure of optimal colorings is
more intricate and diverse when we consider the broader model of random block
graphs.

In order to formulate our results we first introduce some notation. Let $k \in
\N$ and $G = G(n, \balpha, P)$ where $\balpha \in \mathbb{R}^k$ and $P \in
\mathbb{R}^{k \times k}$. Before we consider the chromatic number of $G$ we look
at the distribution of independent sets in $G$, as the two parameters are
inherently dependent on each other. Our description is heavily
inspired by the presentation in~\cite{dolevzal2017cliques}, but it is adapted
and generalized to fulfill our needs. For a vector $\bfm{c} \in \R^k$ and $I
\subseteq [k]$ define the map
\begin{equation}\label{eq:function-g}
  g(\bfm{c}, I) := \sum_{i \in I} c_i + \frac{1}{2} \sum_{i, j \in I} c_i c_j
  \ln (1 - p_{ij}).
\end{equation}
The quantity has a natural interpretation. Let $X_{\bfm{c}, I}$ be the number of
independent sets in $G$ that intersect each $V_i$, $i \in I$, at
$c_i\ln n + O(1)$ vertices. Then, as it turns out (see Section~\ref{sec:ind-sets})
\[
  \frac{1}{\ln^2n} \ln \mathbb{E}[X_{\bfm{c}, I}] = g(\bfm{c}, I) + o(1).
\]
From Markov's inequality we readily obtain that w.h.p.~$X_{\bfm{c}, I} = 0$ if
$g(\bfm{c},I) < 0$. Thus, let $\cA \subset \R^k$ be defined as
\begin{equation}\label{eq:set-A}
  \cA := \big\{ \bfm{c} \in {\R}_{\geq 0}^k : g(\bfm{c}, I) \ge 0 \text{ for all
  } \varnothing \neq I\subseteq [k] \big\}.
\end{equation}
As we will show, the (integer) vectors in $\ln n \cdot \cA$ essentially describe all \emph{admissible types} of independent sets
that we encounter in $G$ w.h.p., where ``type'' refers to a vector $\bfm{t} =
(t_1, \dotsc, t_k)$, and an independent set is of type $\bfm{t}$ if it
intersects each $V_i$, $i \in [k]$, in $t_i$ vertices (see
Figure~\ref{fig:setillu} for an illustration when $k=2$).

\begin{figure}[!htbp]
  \centering
  \includegraphics[scale=0.7]{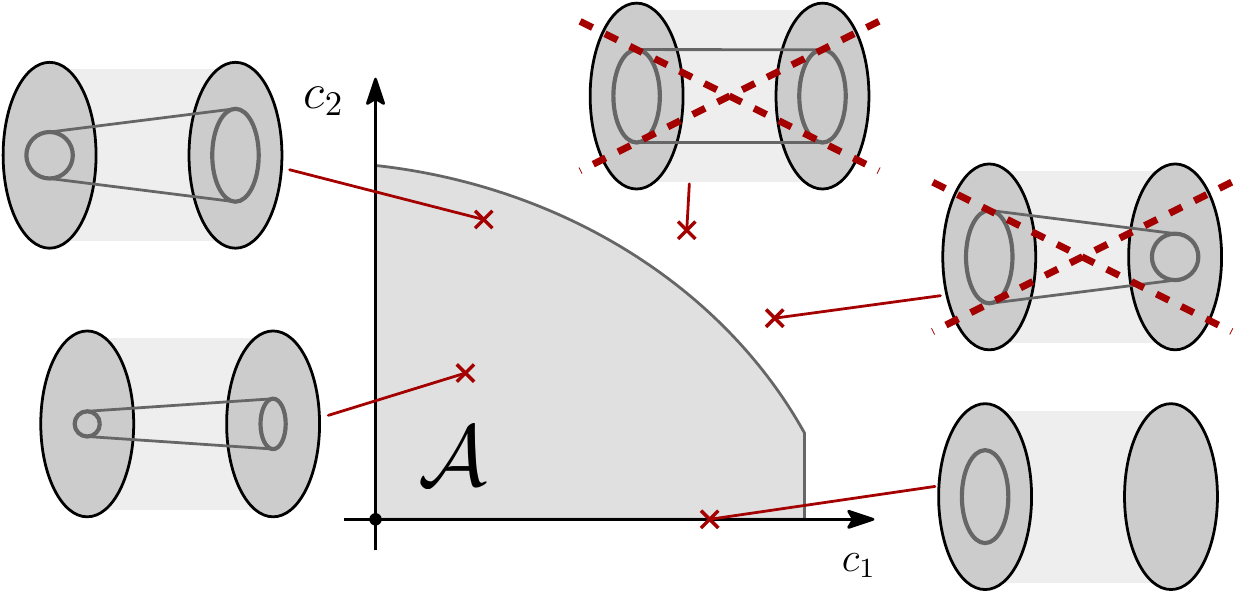}
  \caption{An example of the shape of $\calA$ and admissible types of
  independent sets in the case $k=2$. Here $p_{2} \ge p_{1} \ge p_{12}$.}
 \label{fig:setillu}
\end{figure}

Actually, we will show in Section~\ref{sec:ind-sets} even more, namely that
w.h.p.~every sufficiently large subset of $G$ contains an independent set of any
type in $(1-o(1))\ln n \cdot \cA$; this paves the way for greedily coloring $G$
with great flexibility. Our main result reads as follows, where we write conv$(\cA)$ for the convex hull of $\cA$.
\begin{restatable}{theorem}{chromaticnumber}\label{thm:main-chromatic-num}
  Let $k \geq 1$, $\balpha \in (0, 1]^k$ with $|\balpha| = 1$, and let $P =
  (p_{ij})_{i, j \in [k]}$ be a symmetric matrix with all $p_{ij} \in (0, 1)$.
  Consider a random block graph $G \sim \Gnp$. Then w.h.p.\
  \[
    \chi(G) = (1 + o(1)) \frac{n}{c^\star \ln n},
  \]
  where $c^\star$ is given as
  \begin{equation}\label{eq:c-star}
    c^\star = c^\star(\bm \alpha, P) = \max \big\{ |\bfm{c}| : \bfm{c} \in
    \conv(\cA) \cap \{ t \cdot\balpha  : t \in {\R}_{\geq 0} \} \big\}.
  \end{equation}
\end{restatable}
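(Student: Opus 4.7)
The plan is to prove matching upper and lower bounds $\chi(G) = (1 \pm o(1)) n/(c^\star \ln n)$ w.h.p., both driven by the first-moment and flexibility results for independent sets announced in Section~\ref{sec:ind-sets}. The lower bound comes from averaging the types of color classes in any proper coloring, while the upper bound comes from a greedy strip-off procedure guided by Carath\'eodory's theorem.

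For the \textbf{lower bound}, fix $\delta > 0$ and define
\[
  \cA_\delta := \big\{ \bfm{c} \in \RR_{\geq 0}^k : g(\bfm{c}, I) \geq -\delta \text{ for every } \varnothing \neq I \subseteq [k]\big\}.
\]
The first-moment asymptotic $\ln \EE[X_{\bfm{c}, I}] = (g(\bfm{c}, I) + o(1))\ln^2 n$ of Section~\ref{sec:ind-sets}, applied to each integer type $\bt$ and union-bounded over the at most $n^k$ candidate types, shows that w.h.p.\ every independent set in $G$ has normalized type $\bt/\ln n \in \cA_\delta$: any type with some $g(\bt/\ln n, I) < -\delta$ has expected count at most $n^{-\Omega(\ln n)}$. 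Consider an arbitrary proper coloring of $G$ into $\chi := \chi(G)$ classes of types $\bt^{(1)}, \dotsc, \bt^{(\chi)}$; since $\sum_{j} \bt^{(j)} = n\balpha$, averaging yields
\[
  \frac{n\balpha}{\chi \ln n} = \frac{1}{\chi}\sum_{j=1}^\chi \frac{\bt^{(j)}}{\ln n} \in \conv(\cA_\delta).
\]
As this point lies on the ray $\{t\balpha : t \geq 0\}$, its $1$-norm is at most $c^\star_\delta := \max\{|\bfm{c}| : \bfm{c} \in \conv(\cA_\delta) \cap \{t\balpha : t \geq 0\}\}$. A short compactness argument (using that $\cA_\delta \downarrow \cA$ together with Carath\'eodory decompositions in $\conv(\cA_\delta)$) yields $c^\star_\delta \to c^\star$ as $\delta \downarrow 0$, hence $\chi \geq (1-o(1)) n/(c^\star \ln n)$.

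For the \textbf{upper bound}, fix small $\eps > 0$. By Carath\'eodory's theorem in $\RR^k$, there exist $\bfm{c}^{(1)}, \dotsc, \bfm{c}^{(m)} \in \cA$ with $m \leq k+1$ and non-negative weights $\lambda_1, \dotsc, \lambda_m$ summing to one such that $\sum_j \lambda_j \bfm{c}^{(j)} = c^\star \balpha$. Replacing each $\bfm{c}^{(j)}$ by $(1-\eps)\bfm{c}^{(j)}$ makes every constraint $g(\bfm{c}, I) \geq 0$ strict (since $g(\lambda \bfm{c}, I)$ is a downward parabola in $\lambda$ with $g(\bfm{0}, I) = 0$), so each $(1-\eps)\bfm{c}^{(j)}$ lies in the interior of $\cA$. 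I then run the following greedy procedure: for $T := \lfloor (1-2\eps) n/(c^\star \ln n) \rfloor$ rounds, at round $t$ I pick $j_t \in [m]$ so that the empirical frequencies $|\{s \leq t : j_s = j\}|/t$ track $\lambda_j$, and extract from the current remaining set $W_t$ an independent set of type $\lfloor (1-\eps)\bfm{c}^{(j_t)} \ln n \rfloor$. A direct bookkeeping check gives $|W_t \cap V_i| \geq \eps \alpha_i n$ throughout, so the flexibility statement of Section~\ref{sec:ind-sets} keeps supplying the required independent sets. Finally, the $O(\eps n)$ leftover vertices are colored with $O(\eps n / \ln n)$ extra colors by iteratively extracting any independent set of logarithmic size. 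The total color count is $(1 + O(\eps)) n/(c^\star \ln n)$, and letting $\eps \to 0$ concludes the bound.

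The main obstacle is coordinating the boundary behavior in the upper bound. Since $c^\star \balpha$ lies on the boundary of $\conv(\cA)$ by definition of $c^\star$, the Carath\'eodory vectors $\bfm{c}^{(j)}$ may lie on the boundary of $\cA$ itself, where the flexibility statement of Section~\ref{sec:ind-sets} applies only after a shrinkage by $(1-\eps)$. This forces a delicate balance in the greedy phase between the shrinkage $\eps$, the number of rounds $T$, and the size of the leftover $W_T$. Moreover, because the independent sets extracted at each round depend adaptively on $G$, one has to phrase the flexibility statement as a \emph{deterministic} property of $G$ that holds simultaneously for every sufficiently balanced subset $W \subseteq V$, so that no further conditioning is needed when $W$ is replaced by the (random, adaptive) set $W_t$.
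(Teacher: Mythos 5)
Your proposal is correct and follows essentially the same route as the paper: a Janson-based flexibility statement for independent sets of types in a slightly shrunken $\cA$, turned into a deterministic property of $G$ valid simultaneously for all large balanced subsets (so the adaptive greedy stripping is sound), a Carath\'eodory decomposition of $c^\star\balpha$ into at most $k+1$ points of $\cA$ for the upper bound, and an averaging argument over color-class types for the lower bound. Two minor divergences are worth flagging. First, in the lower bound you introduce the relaxed sets $\cA_\delta$ so that a coarse first-moment estimate gives expectation $n^{-\Omega(\ln n)}$, and then appeal to a compactness argument to show $c^\star_\delta\to c^\star$; the paper instead proves directly (Lemma~\ref{lem:no-ind-set-out-of-A}) that w.h.p.\ \emph{no} independent $\bfm{t}$-set with $\bfm{t}\in\N_0^k\setminus(\ln n\cdot\cA)$ exists at all, by restricting to the minimal violating sub-type, retaining the $t_i\ln t_i$ terms in the expectation, and applying Jensen; this avoids the extra $\delta\to 0$ limit. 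Second, in the upper bound you run a single Carath\'eodory-driven greedy with fixed $\eps$ and let $\eps\to 0$ at the end, whereas the paper takes $\eps=7\ln\ln n/\ln n$ and splits into the cases $\bfm{c^\star}\in\cA$ (one type suffices) and $\bfm{c^\star}\notin\cA$ (convex combination of $k+1$ types); your unified treatment subsumes both cases at the cost of a slightly weaker error term, but that is immaterial for the stated $(1+o(1))$ asymptotic. Both your workarounds are logically sound; the paper's Lemma~\ref{lem:no-ind-set-out-of-A} is simply the sharper and more self-contained version of the lower-bound ingredient.
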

Let us for a moment dwell on the definition of $c^\star$. Consider an optimal
coloring of a typical instance of $G$, which is merely a partition of the
vertices of $G$ into independent sets $S_1, \dotsc, S_{\chi(G)}$. We assume that
$\chi(G) = n/(c \ln n)$ and want to determine $c$. As already mentioned, the
$S_i$'s have type  in $\ln n \cdot \cA$. Moreover, obviously
$|V_i| = \sum_{1 \leq j \leq \chi(G)} |V_i \cap S_j|$. Thus, the average intersection of
the color classes with the part $V_i$ is $ \overline{s_i} := \alpha_i n/\chi(G)
= c\alpha_i \ln n$. In conclusion, in any (in particular, in an {\em optimal})
coloring, the average intersection of the color classes with each $V_i$ is
proportional to $\ln n \cdot \balpha$ and is furthermore a {\em convex
combination} of some types in $\ln n \cdot \cA$. Hence, it comes as no surprise
that in order to determine $\chi(G)$, $c$ should be chosen to be maximal under
these side constraints, and this is exactly~\eqref{eq:c-star}.

In the proof, which is conducted in Section~\ref{sec:chi}, we {\em construct an explicit coloring of $G$} with the claimed
number of colors, that is, we carefully pick different types of independent sets
from $\ln n\cdot \cA$ and cover with them all but at most $o(n/\ln n)$ vertices.
In particular, depending very much on the shape of $\cA$, this may result in
different types of colorings: we may end up coloring all parts $V_1, \dotsc,
V_k$ independently with different colors, or at the other end of the spectrum,
we may choose just a single type $\bfm{t}$ such that $t_i / t_j \sim \alpha_i /
\alpha_j$ for all $i,j\in[k]$ and thus cover (almost) all vertices just with
sets of type $\bfm{t}$. These two cases are not exhaustive and as it turns out,
in general we may cover (almost) all vertices with independent sets of $k+1$
different types.

As a remark, by taking $k = 1$, $\alpha_1 = 1$, and $p_{1} = 1/2$, we recover
the classic result of Bollob\'{a}s~\cite{bollobas1988chromatic}. In
Section~\ref{sec:specialcases} we present various special cases of the result.
Among others, we study the case $k=2$ in detail and characterize explicitly in
all cases the structure of the optimal colorings. Moreover, for general
$k\in\mathbb{N}$ we characterize the cases in which
\[
	\chi(G) \sim \sum_{1 \le i \le k} \chi(G[V_i]),
\]
that is, an optimal coloring of $G$ is essentially obtained by coloring each of
the $k$ subgraphs individually; as we show, this happens if and only if $p_{ij}
\ge 1-\sqrt{(1-p_{i})(1-p_{j})}$. Our last example concerns one more
relevant case, namely when there is some homogeneity with respect to the edge
probabilities. In particular, we assume that all inter-class probabilities
$p_{ij}$, for $i \neq j$, are equal, and all intra-class probabilities are also
equal. In that case, we determine explicitly the asymptotic value of the
chromatic number.

Note that we determine $\chi(G)$ in the case when $p_{ij}$'s and $\alpha_i$'s
are fixed and independent of $n$. Extending this for $p_{ij} = p_{ij}(n)$ and
$\alpha_i = \alpha_i(n)$ remains an open problem for further research.

\subsection{Graph limits}

In an even more general setting we may look at limits of dense graph sequences.
For a detailed introduction to the topic we refer the reader to the wonderful
book of Lov\'{a}sz~\cite{lovasz2012large}. A {\em graphon} is a symmetric
measurable function $W \colon \Omega \times \Omega \to [0, 1]$, where $\Omega$
is a probability space. In order to show that every graphon $W$ is attained as a
limit of a sequence of finite graphs, Lov\'{a}sz and
Szegedy~\cite{lovasz2007szemeredi} introduced a random graph model $\GG(n, W)$
defined as follows. The vertex set of a graph $G \sim \GG(n, W)$ is $[n]$.  In
order to sample a graph $G \sim \GG(n, W)$, one first generates a sequence of
$n$ points $x_1, \dotsc, x_n \in \Omega$ and subsequently makes $x_ix_j$ an edge
with probability $W(x_i, x_j)$ independently of everything else. Clearly, the
class of stochastic block models $\Gnp$ is (essentially) a special case of
$\GG(n, W)$.

As in the block model, we define a continuous version of the function $g$
in~\eqref{eq:function-g}, where we replace sums by integrals and values by
densities. More specifically (and compare also with \cite{dolevzal2017cliques}),
define for a measurable $I \subseteq \Omega$ and a non-negative $L^1$-function
$c$,
\[
  \tilde g(c, I) = \int_{x \in I} c(x) d\nu + \frac{1}{2} \int_{(x, y) \in I
  \times I} c(x)c(y) \ln(1 - W(x, y)) d\nu^2.
\]
Moreover, the set of ``admissible types'' is defined analogously by
\[
  \tilde\cA = \{ c : c \text{ is a non-negative $L^1$-function on $\Omega$ such
  that } \tilde g(c, I) \geq 0 \text{ for all measurable } I \subseteq \Omega \}.
\]
We suspect that the chromatic number of $G \sim \GG(n, W)$ is then obtained in a
way similar to the one from Theorem~\ref{thm:main-chromatic-num}.
\begin{conjecture}
  Let $W \colon \Omega \times \Omega \to [0, 1]$ be a graphon with
  essential infimum in $(0,1)$ and consider a graph $G \sim \GG(n, W)$. Then w.h.p.\
  \[
    \chi(G) = (1 + o(1)) \frac{n}{c^\star \ln n},
  \]
  where $c^\star$ is given as
  \[
    c^\star = c^\star(W) = \sup{\big\{ \lVert c \rVert_1 : c \in
    \mathrm{conv}(\tilde\cA) \cap \{ t \cdot \nu : t \in {\R}_{\geq 0} \}
    \big\}}.
  \]
\end{conjecture}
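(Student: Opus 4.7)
The natural strategy is a discretisation argument: approximate $W$ by step-function graphons arbitrarily well, reduce to the block-model case settled by Theorem~\ref{thm:main-chromatic-num}, and pass to the limit. For each $m \in \N$ fix a measurable partition $\Omega = \Omega^{(m)}_1 \cup \dotsb \cup \Omega^{(m)}_{k_m}$ that refines as $m$ grows and (by standard approximation of $L^1$ functions) satisfies $\|W^+_m - W^-_m\|_1 \to 0$, where
\[
  W^-_m(x,y) = \inf_{\Omega^{(m)}_i \times \Omega^{(m)}_j} W, \qquad W^+_m(x,y) = \sup_{\Omega^{(m)}_i \times \Omega^{(m)}_j} W, \qquad (x,y) \in \Omega^{(m)}_i \times \Omega^{(m)}_j.
\]
Since the essential infimum of $W$ lies in $(0,1)$ and, as appears necessary for a clean formula, we may also assume the essential supremum of $W$ is bounded away from $1$, each $W^\pm_m$ takes values in a common compact subinterval of $(0,1)$. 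Realising the three graphs $G, G^\pm_m \sim \GG(n, W), \GG(n, W^\pm_m)$ from the same sample $x_1, \dotsc, x_n$ and the same coin flips yields a monotone coupling with $E(G^-_m) \subseteq E(G) \subseteq E(G^+_m)$, and hence $\chi(G^-_m) \leq \chi(G) \leq \chi(G^+_m)$.

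Conditional on $x_1, \dotsc, x_n$, the graph $G^\pm_m$ is distributed as a random block graph $G(n, \balpha^{(m)}, P^\pm_m)$ with $\alpha^{(m)}_i = |\{\ell : x_\ell \in \Omega^{(m)}_i\}|/n$ and $P^\pm_{m, ij}$ equal to the common value of $W^\pm_m$ on $\Omega^{(m)}_i \times \Omega^{(m)}_j$. Standard Chernoff bounds give $\alpha^{(m)}_i = \nu(\Omega^{(m)}_i) + o(1)$ w.h.p.\ for every $i \in [k_m]$, so Theorem~\ref{thm:main-chromatic-num} yields $\chi(G^\pm_m) = (1 + o(1)) n/(c^\star(\balpha^{(m)}, P^\pm_m) \ln n)$ w.h.p., and by continuity of $c^\star$ in $\balpha$ we may replace $\balpha^{(m)}$ by the deterministic vector $(\nu(\Omega^{(m)}_1), \dotsc, \nu(\Omega^{(m)}_{k_m}))$. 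Letting $m = m(n) \to \infty$ slowly enough along a diagonal sequence so that all error terms remain $o(1)$ reduces the conjecture to the purely analytic statement
\[
  c^\star(\balpha^{(m)}, P^\pm_m) \longrightarrow c^\star(W) \qquad \text{as } m \to \infty,
\]
which is the principal obstacle.

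One half of this continuity is easy: since $\ln(1-\cdot)$ is decreasing, $W^+_m \geq W \geq W^-_m$ gives $\tilde{\cA}(W^+_m) \subseteq \tilde{\cA}(W) \subseteq \tilde{\cA}(W^-_m)$, and therefore $c^\star(W^+_m) \leq c^\star(W) \leq c^\star(W^-_m)$. The hard direction is the matching $\liminf$ and $\limsup$. Since $c$ is constrained to the one-parameter ray $\{t \nu : t \geq 0\}$, feasibility $\tilde{g}(t\nu, \Omega; W^\pm_m) \geq 0$ forces $t$ into a bounded interval uniform in $m$, so compactness is not an issue and one may extract subsequential limits $t^\pm$ of the near-optimal parameters. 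The genuine difficulty is then to verify that $t^\pm \nu$ satisfies all of the uncountably many constraints $\tilde{g}(t^\pm \nu, I; W) \geq 0$, and, symmetrically, that a near-optimal $t$ for $W$ survives the replacement $W \rightsquigarrow W^\pm_m$. I would attempt this by first handling the constraints with $I$ a union of cells of the partition $\{\Omega^{(m)}_i\}$, where $L^1$-convergence $W^\pm_m \to W$ transfers the constraints directly, and then invoking a weak-regularity (Frieze--Kannan) approximation to reduce arbitrary measurable $I$ to such block-aligned sets up to a small cut-norm error, so that the constraint values change by at most $o(1)$ uniformly in $I$. Combined with the sandwich from the preceding paragraphs, this continuity would yield the claimed asymptotic for $\chi(G)$.
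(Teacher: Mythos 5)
The statement you are attempting to prove is labelled a \emph{conjecture} in the paper, and the authors explicitly write that it ``seemed to be out of reach for our techniques.'' So there is no in-paper proof to compare against; the question is whether your discretisation argument closes the gap. It does not, for reasons that are mostly identified by you but in one case misattributed as ``standard.''

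First, the claim that one can choose refining partitions $\Omega = \Omega^{(m)}_1 \cup \dotsb \cup \Omega^{(m)}_{k_m}$ so that the cell-wise envelopes satisfy $\lVert W^+_m - W^-_m\rVert_1 \to 0$ is \emph{not} a standard fact about $L^1$ approximation. What is standard (martingale convergence, or density of step functions in $L^1$) is that the conditional expectations $\E[W \mid \sigma(\mathcal P_m)\otimes\sigma(\mathcal P_m)]$ converge to $W$ in $L^1$. That is an entirely different statement from convergence of the essential infimum/supremum envelopes, which amounts to a form of product-Riemann-integrability of $W$ and is a genuine regularity assumption on the graphon. You need this monotone envelope precisely because you want a monotone coupling $E(G^-_m)\subseteq E(G)\subseteq E(G^+_m)$; the conditional expectation gives no such coupling. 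So as written, the very first step of the sandwich already requires a restriction on $W$ that you do not establish. (You also silently add the hypothesis that the essential supremum of $W$ is bounded away from $1$, which is not assumed in the conjecture; together with the envelope issue, what you would be proving is a strictly weaker statement even if the rest went through.)

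Second, and independently, the diagonal argument ``$m = m(n)\to\infty$ slowly enough'' presupposes a quantitative version of Theorem~\ref{thm:main-chromatic-num}. The theorem is stated and proved for \emph{fixed} $k$, $\balpha$, $P$, with error terms $o(1)$ whose rate depends on these parameters in ways that the paper's proof does not make explicit. Worse, as $m$ grows the partition refines, so $k_m\to\infty$ and $\min_i \alpha^{(m)}_i\to 0$; the arguments in Section~2 use bounds like $s_i\geq \alpha_i n/\ln^2 n$ and constants $\delta$, $C$ that depend on $\min_i\alpha_i$, $\min_{i,j} p_{ij}$, $\max_{i,j}p_{ij}$, and $k$, all of which degenerate in that limit. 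Extracting the needed uniformity is a genuine research effort, not a routine step, and is in fact close to the very obstruction the authors cite. Finally, you yourself flag the continuity $c^\star(\balpha^{(m)}, P^\pm_m) \to c^\star(W)$ as ``the principal obstacle'' and offer only a sketch via weak regularity; I agree this is a real gap, and the $L^1/\text{cut-norm}$ control you invoke does not obviously control a supremum of quadratic constraints over \emph{all} measurable $I$. In short: the overall strategy is the natural one, but each of the three load-bearing steps (the monotone envelope approximation, the uniform quantitative block theorem, and the continuity of $c^\star$) is currently unproved, and the first two are not merely technical.
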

However, this seemed to be out of reach for our techniques and we leave it as a
question for further research.

\section{Independent Sets}\label{sec:ind-sets}

In this section we study the distribution of independent sets in the random
block graph $\Gnp$. This serves as a main ingredient towards deriving the
desired bounds on the chromatic number later on. Throughout, for $\bfm{t} \in
\N_0^k$ we say that a set $S \subseteq V(G)$ is a $\bfm{t}$-{\em set} or
\emph{of type \bfm{t}} in $\Gnp$ if $S \cap V_i = t_i$ for every $i \in [k]$.
Vectors are denoted by lower-case bold letters. Given vectors $\bfm{u}, \bfm{v}
\in \N_{\geq 0}^k$, we write $\bfm{u} \leq \bfm{v}$ if $u_i \leq v_i$ for all $i
\in [k]$.

Our starting point and main technical tool in this section is a simple
consequence of Janson's inequality,
see~\cite[Section~21.6]{frieze2016introduction}, which we restate in a variant
convenient for our application.
\begin{theorem}[Janson's inequality]\label{thm:janson}
  Let $k \in \N$, $\balpha \in (0, 1]^k$ with $|\balpha| = 1$, and let $P =
  (p_{ij})_{i, j \in [k]}$ be a symmetric matrix with all $p_{ij} \in (0, 1)$.
  Consider a family $\{ S_i \}_{i \in \cI}$ of subsets of the vertex set $[n]$
  and let $G \sim \Gnp$. For each $i \in \cI$, let $X_i$ denote the indicator
  random variable for the event $\{S_i \text{ is an independent set in } G\}$
  and, for each ordered pair $(i, j) \in \cI \times \cI$, write $X_i \sim X_j$
  if the variables $X_i$ and $X_j$ are not independent. Let
  \begin{align*}
    X := \sum_{i \in \cI} X_i, \qquad \mu := \E[X], \qquad \text{and} \qquad
    \overline\Delta := \sum_{\substack{(i, j) \in \cI \times \cI \\ X_i \sim X_j}} \E[X_i
    X_j].
  \end{align*}
  Then
  \[
    \Pr[X = 0] \leq e^{-\mu^2/(2\overline\Delta)}.
  \]
\end{theorem}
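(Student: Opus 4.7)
My plan is to deduce this statement as a direct corollary of the standard Janson inequality (as stated, e.g., in \cite[Section~21.6]{frieze2016introduction}) after a short change-of-viewpoint argument. The classical form of Janson's inequality concerns a product measure on a ground set $\Omega$, a random subset $R \subseteq \Omega$, and ``upward'' events of the form $B_i = \{A_i \subseteq R\}$ for fixed $A_i \subseteq \Omega$. To fit our setting into this framework I would take $\Omega = \binom{[n]}{2}$ and declare each pair $\{u,v\}$ with $u \in V_a$, $v \in V_b$ to lie in $R$, independently of all other pairs, with probability $q_{ab} := 1 - p_{ab}$. Then $R$ is precisely the set of \emph{non-edges} of $G \sim \Gnp$, and $S_i$ is an independent set of $G$ if and only if $A_i := \binom{S_i}{2} \subseteq R$. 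Consequently $X_i = \mathbf{1}[B_i]$, and $\{X = 0\}$ is exactly $\bigcap_{i \in \cI} \overline{B_i}$, which is the event controlled by Janson.

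Once this translation is in place, it only remains to check that the dependency structure and the moment quantities match those appearing in the classical bound. Two indicators $X_i$, $X_j$ are measurable with respect to disjoint coordinates of the product measure, and hence independent, exactly when $A_i \cap A_j = \varnothing$, i.e.\ when $|S_i \cap S_j| \leq 1$; this coincides with the relation $\not\sim$ used in the statement. The quantity $\mu = \E[X]$ is identical in both formulations, and the double sum $\overline{\Delta} = \sum_{(i,j) : X_i \sim X_j} \E[X_i X_j]$ (in which the diagonal $i=j$ is implicitly included, since $X_i$ is certainly not independent of itself) coincides with the usual $\mu + 2\Delta$ that appears in textbook treatments. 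Plugging this into the classical bound $\Pr[\bigcap_i \overline{B_i}] \leq \exp(-\mu^2/(2\overline{\Delta}))$ then yields the displayed inequality verbatim.

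The only mild subtlety I anticipate is that the underlying Bernoulli variables are \emph{non-uniform}, having parameters $q_{ab}$ that depend on the pair of parts. I expect this to be a non-issue, since the proof of Janson's inequality via the FKG correlation inequality goes through unchanged for any product Bernoulli measure, and most modern textbook statements are already phrased in this generality. Thus the entire argument reduces to a reformulation (``no edge in $S_i$'' $\leftrightarrow$ ``all corresponding slots of $R$ present'') followed by an appeal to the classical result; the genuinely substantive work of Section~\ref{sec:ind-sets} will come later, when this inequality is applied to carefully chosen families $\{S_i\}_{i \in \cI}$ and $\mu$ and $\overline{\Delta}$ are estimated in terms of the function $g(\bfm{c}, I)$.
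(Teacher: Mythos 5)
The paper does not prove this statement; it cites it verbatim as ``a simple consequence of Janson's inequality'' from \cite[Section~21.6]{frieze2016introduction}, leaving the translation to the reader. Your proposal supplies exactly that translation, and it is correct: taking the ground set to be $\binom{[n]}{2}$, declaring each pair present in $R$ independently with probability $q_{ab}=1-p_{ab}$, and identifying ``$S_i$ independent'' with the upward event $\binom{S_i}{2}\subseteq R$ is precisely the right change of viewpoint. You also correctly observe that $X_i\sim X_j$ iff $|S_i\cap S_j|\geq 2$, that the ordered double sum defining $\overline{\Delta}$ (with the diagonal included) matches the textbook quantity $\mu+2\Delta$, and that Janson's inequality holds for an arbitrary product Bernoulli measure, so the non-uniformity of the $q_{ab}$ is harmless. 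In short, you have filled in the routine step the paper implicitly delegates to the cited reference; there is nothing further to check.
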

The next lemma is the central result of this section. In simple terms, it states
that w.h.p.\ whenever we take a sufficiently large subset of vertices of $G \sim
\Gnp$, there is an independent $\bfm{t}$-set, for any $\bfm{t}$ that ``falls''
within the set $\cA$, that is, $\bfm{t} \in (1 - o(1)) \ln n \cdot \cA$. This
lemma alone allows us to greedily take out independent sets (color classes) from
$\Gnp$ as long as there is some ``large'' set of vertices remaining in each
$V_i$.

\begin{lemma}\label{lem:non-existance-bound}
  Let $G \sim \Gnp$. Let $\bfm{s} \in \N_0^k$ be such that $s_i \geq \alpha_i
  n/\ln^2 n$ for all $i \in [k]$ and $S \subseteq [n]$ be an $\bfm{s}$-set. Then, for every $\bfm{t} \in (\ln n - 7\ln\ln n) \cdot \cA \cap \N_0^k$ and
  $X_\bfm{t}$ being the random variable denoting the number of independent
  $\bfm{t}$-sets in $G[S]$
  \[
    \Pr[X_\bfm{t} = 0] = e^{- \Omega(n^2/\ln^{8} n)}.
  \]
\end{lemma}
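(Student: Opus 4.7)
The plan is to apply Janson's inequality (Theorem~\ref{thm:janson}). For each $\bfm{t}$-subset $T \subseteq S$, let $X_T$ be the indicator that $T$ is independent in $G$, so that $X_\bfm{t} = \sum_T X_T$. Since $X_{T_1}$ and $X_{T_2}$ are independent precisely when $|T_1 \cap T_2| \leq 1$, Janson yields $\Pr[X_\bfm{t} = 0] \leq \exp(-\mu^2/(2\overline\Delta))$, where $\mu = \E[X_\bfm{t}]$ and $\overline\Delta$ sums $\E[X_S X_T]$ over ordered pairs of $\bfm{t}$-sets with $|S \cap T| \geq 2$. It therefore suffices to prove $\mu^2/\overline\Delta = \Omega(n^2/\ln^8 n)$.

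Writing $L = \ln n$, $\ell = \ln\ln n$, and $\bfm{c} := \bfm{t}/(L-7\ell) \in \cA$, linearity gives
\[
\mu = \prod_i \binom{s_i}{t_i} \cdot \prod_i (1-p_i)^{\binom{t_i}{2}} \prod_{i<j} (1-p_{ij})^{t_i t_j}.
\]
The bound $\binom{s_i}{t_i} \geq (s_i/t_i)^{t_i}$ combined with $s_i \geq \alpha_i n/L^2$ and the identity $(L-7\ell)(L-3\ell) = L^2 - 10 L\ell + O(\ell^2)$ gives $\sum_i \ln\binom{s_i}{t_i} \geq L^2 \sum_i c_i - 10\, L\ell \sum_i c_i + O(L)$. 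For the quadratic edge factor, the condition $g(\bfm{c},[k]) \geq 0$ reads $\tfrac12 \sum_{i,j} c_i c_j q_{ij} \leq \sum_i c_i$ with $q_{ij} := -\ln(1-p_{ij})$, whence the log of the edge factor is at least $-(L-7\ell)^2 \sum_i c_i$. Combining the two contributions yields $\ln\mu \geq (4 + o(1))\, L\ell \sum_i c_i$, which is $\omega(L)$ as long as $\sum_i c_i > 0$ (the case $\sum_i c_i = 0$ forces $\bfm{t}=\bfm{0}$ and is trivial).

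For $\overline\Delta$, decompose by intersection type $\bfm{r}$: a pair $(S,T)$ with $|S\cap T \cap V_i| = r_i$ satisfies $\E[X_S X_T] = \E[X_S]^2 \cdot \exp\bigl(\sum_i \binom{r_i}{2} q_i + \sum_{i<j} r_i r_j q_{ij}\bigr)$, and the number of such ordered pairs is $\prod_i \binom{s_i}{t_i}\binom{t_i}{r_i}\binom{s_i-t_i}{t_i-r_i}$. Thus
\[
\frac{\overline\Delta}{\mu^2} = \sum_{\substack{\bfm{0}\leq \bfm{r} \leq \bfm{t}\\|\bfm{r}|\geq 2}} \prod_i \frac{\binom{t_i}{r_i}\binom{s_i-t_i}{t_i-r_i}}{\binom{s_i}{t_i}} \cdot \exp\!\Bigl(\sum_i \tbinom{r_i}{2} q_i + \sum_{i<j} r_i r_j q_{ij}\Bigr).
\]
The terms with $|\bfm{r}| = 2$ are the dominant ones: each hypergeometric factor is $O(L^2)\cdot(L^3/n)^2 = O(L^8/n^2)$ and the exponential factor is $O(1)$, so the total contribution from $|\bfm{r}|=2$ is $O(L^8/n^2)$.

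The main obstacle is to show that the $|\bfm{r}| \geq 3$ terms are of lower order. Writing $\bfm{r} = (L-7\ell)\bfm{\rho}$ with $\bfm{\rho}$ supported on $I \subseteq [k]$ and $\bfm{0} \leq \bfm{\rho} \leq \bfm{c}$, a Stirling expansion mirroring the computation of $\ln\mu$ yields, for the summand $R_\bfm{r}$,
\[
\ln R_\bfm{r} \leq -4\, L\ell \sum_i \rho_i \;-\; (L-7\ell)^2\, g(\bfm{\rho}, I) \;+\; O(L),
\]
where both displayed principal terms are non-positive. The non-positivity of $-g(\bfm{\rho}, I)$ relies on the structural fact that $g(\cdot, I) \geq 0$ on the entire box $\prod_{i\in I}[0, c_i]$: as a quadratic with constant Hessian $-Q_I$, where $Q_I = (q_{ij})_{i,j\in I}$ has strictly positive entries so that $\bfm{1}^T Q_I \bfm{1} > 0$, the Hessian is not positive semidefinite, hence $g(\cdot, I)$ has no interior local minimum on the box and must attain its minimum at a vertex; but the vertices are precisely the points $\bfm{c}_J$ for $J \subseteq I$ (where $(\bfm{c}_J)_i = c_i$ for $i \in J$ and $0$ otherwise), and $g(\bfm{c}_J, I) = g(\bfm{c}, J) \geq 0$ by $\bfm{c} \in \cA$. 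Together with a careful book-keeping of the lower-order terms, this shows that each $R_\bfm{r}$ with $|\bfm{r}| \geq 3$ is smaller than the $|\bfm{r}| = 2$ contribution by a factor at least $(L^4/n)^{|\bfm{r}|-2}$, so summing gives $\overline\Delta/\mu^2 = O(L^8/n^2)$ and thus $\Pr[X_\bfm{t} = 0] \leq \exp(-\Omega(n^2/\ln^8 n))$.
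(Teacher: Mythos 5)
Your proposal follows the same high-level plan as the paper -- Janson's inequality, a decomposition of $\overline\Delta/\mu^2$ by the overlap vector, and the claim that the $|\bfm{r}|=2$ terms dominate -- but the way you bound the individual summands is genuinely different. The paper avoids Stirling entirely by forming the telescoping ratio $f(\bfm{o})/f(\bfm{o}+\bfm{e}_z)$, showing each step gains a factor $\delta n/\ln^4 n$ times an edge-probability correction, and then chaining; you instead expand $\ln R_\bfm{r}$ directly via Stirling and compare it to the analogous expansion of $\ln\mu$. Both arguments ultimately hinge on the same structural fact: that $g(\bfm{\rho},I)\ge 0$ whenever $\bfm{0}\le\bfm{\rho}\le\bfm{c}$ and $\bfm{c}\in\cA$ (the paper invokes it at the step ``using the definition of $\cA$'' applied to $\bfm{d}\le\bfm{t}$, without comment). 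Your justification of this fact -- that $g(\cdot,I)$ restricted to any face of the box has Hessian $-Q_J$ which is never PSD because $\mathbf{1}^TQ_J\mathbf{1}>0$, so the minimum over the box sits at a vertex $\bfm{c}_J$ where $g(\bfm{c}_J,I)=g(\bfm{c},J)\ge 0$ -- is a clean and correct addition that the paper leaves implicit, and it is essentially the observation that $\cA$ is downward closed. Two small caveats on your side: the error term in your displayed bound on $\ln R_\bfm{r}$ should be tracked as $O(|\bfm{r}|)$ rather than $O(L)$ (as written, the $O(L)$ swamps the $|\bfm{r}|=2$ and $|\bfm{r}|=3$ cases); and the claimed factor $(L^4/n)^{|\bfm{r}|-2}$ per extra overlap only holds in the regime $|\bfm{r}|=O(1)$, while for $|\bfm{r}|=\Theta(L)$ one falls back on $g(\bfm{\rho},I)\ge 0$ to get $R_\bfm{r}\le (O(1)/L^3)^{|\bfm{r}|}/\prod_i r_i!$, which after summing by $|\bfm{r}|$ is still $o(L^8/n^2)$. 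With those adjustments the book-keeping goes through and the conclusion $\overline\Delta/\mu^2=O(L^8/n^2)$ is correct.
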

\begin{proof}
  Let us write $X_\bfm{t} = \sum_{I \in \cS} X_I$, where $\cS$ is the
  family of all subsets of $S$ that intersect each $V_i$, $i \in [k]$, in
  exactly $t_i$ vertices, and $X_I$ is an indicator random variable for the
  event that $I$ is an independent set in $G[S]$. Set
  \[
    \mu := \E[X_\bfm{t}] \qquad \text{and} \qquad \overline\Delta := \sum_{\substack{(I, J) \in \cS \times \cS \\ X_I \sim X_J}} \E[X_I X_J].
  \]
  This puts us directly into the setup of Janson's inequality
  (Theorem~\ref{thm:janson}) with the goal to show
  \[
    \Pr[X_\bfm{t} = 0] \leq e^{-\mu^2/(2\overline\Delta)} \overset{!}{=}
    e^{-\Omega(n^2/\ln^{8} n)}.
  \]
  The whole proof boils down to showing that the $\overline\Delta$ term can be
  bounded by
  \begin{equation}\label{eq:delta-bound}
    \overline\Delta = O \Big( \mu^2 \cdot \frac{\ln^{8} n}{n^2} \Big),
  \end{equation}
  which is what we accomplish in the remainder. First, it is convenient to determine $\mu = \E[X_\bfm{t}]$ as it helps
  simplify some calculations. For each $i \in [k]$, there are $\binom{s_i}{t_i}$
  choices for the intersection of a $\bfm{t}$-set with $S \cap V_i$.
  Additionally, in order for such a $\bfm{t}$-set to be an independent set in
  $G[S]$, none of the $\binom{t_i}{2}$ pairs can form an edge, which happens
  with probability $(1 - p_i)^{\binom{t_i}{2}}$. Lastly, no two vertices $u, v$
  in the $\bfm{t}$-set with $u \in V_i$ and $v \in V_j$ can form an edge in
  $G[S]$, which happens with probability $(1 - p_{ij})^{t_i t_j}$. Putting all
  of this together, we directly get
  \begin{equation}\label{eq:expectation}
    \mu = \prod_{1 \leq i \leq k} \binom{s_i}{t_i}\cdot \prod_{1 \leq i \leq k}
    (1 - p_i)^{\binom{t_i}{2}} \cdot \prod_{1 \leq i < j \leq k} (1 -
    p_{ij})^{t_it_j}.
  \end{equation}
  We now turn our attention in bounding the $\overline\Delta$ term as promised.
  Note that the $\overline\Delta$ term depends only on those sets which have at least one edge in common, that is,  they
  ``overlap'' in at least two vertices. We denote the overlap vector by
  $\bfm{o}$ and note that $\bfm{o} \leq \bfm{t}$ and $|\bfm{o}| \geq 2$. Each
  $o_i$, for $i \in [k]$, measures the ``overlap'' of the sets inside of the
  part $V_i$. For a fixed overlap vector $\bfm{o}$ and a fixed $i \in [k]$,
  there are at most
  \[
    \binom{s_i}{t_i} \binom{t_i}{o_i} \binom{s_i - t_i}{t_i - o_i}
  \]
  choices for two $\bfm{t}$-sets which intersect in exactly $o_i$ vertices
  within $V_i$. Similarly as above when deriving the expectation, the
  probability of both such $\bfm{t}$-sets being independent is given by a term
  for intra-class edges and inter-class edges and is exactly
  \[
    (1 - p_i)^{2\binom{t_i}{2} - \binom{o_i}{2}} \cdot \prod_{j \neq i} (1 -
    p_{ij})^{2t_it_j - o_io_j}.
  \]
  Thus, the contribution to the $\overline\Delta$ term of a fixed overlap vector
  $\bfm{o}$ is given by
  \[
    \prod_{1 \leq i \leq k} \binom{s_i}{t_i} \binom{t_i}{o_i}
    \binom{s_i-t_i}{t_i-o_i} \cdot \prod_{1 \leq i \leq k} (1-p_i)^{2
    \binom{t_i}{2}-\binom{o_i}{2}} \cdot \prod_{1 \leq i < j \leq k}
    (1-p_{ij})^{2 t_it_j-o_io_j}.
  \]
  Then, by summing up over all choices of $\bfm{o}$, we get
  \begin{align*}
    \overline\Delta & = \sum_{\substack{\bfm{o} \leq \bfm{t} \\
    |\bfm{o}| \geq 2}} \Big( \prod_{1 \leq i \leq k} \binom{s_i}{t_i} \binom{t_i}{o_i}
    \binom{s_i-t_i}{t_i-o_i} \cdot \prod_{1 \leq i \leq k}
    (1-p_i)^{2 \binom{t_i}{2}-\binom{o_i}{2}} \cdot \prod_{1 \leq i < j \leq k}
    (1-p_{ij})^{2 t_it_j-o_io_j} \Big) \\
    & \osref{\eqref{eq:expectation}}= \mu \cdot \sum_{\substack{\bfm{o} \leq \bfm{t} \\
    |\bfm{o}| \geq 2}} \Big( \prod_{1 \leq i \leq k} \binom{t_i}{o_i}
    \binom{s_i-t_i}{t_i-o_i} \cdot \prod_{1 \leq i \leq k}
    (1-p_i)^{\binom{t_i}{2}-\binom{o_i}{2}} \cdot \prod_{1 \leq i < j \leq k}
    (1-p_{ij})^{t_it_j-o_io_j} \Big) \\
    & \osref{\eqref{eq:expectation}}= \mu^2 \cdot \sum_{\substack{\bfm{o} \leq
    \bfm{t} \\ |\bfm{o}| \geq 2}} \Big( \underbrace{\prod_{1 \leq i \leq k}
    \frac{\binom{t_i}{o_i} \binom{s_i - t_i}{t_i - o_i}}{\binom{s_i}{t_i}} \cdot
    \prod_{1 \leq i \leq k} (1 - p_i)^{-\binom{o_i}{2}} \cdot \prod_{1 \leq i <
    j \leq k} (1 - p_{ij})^{-o_io_j}}_{:= f(\bfm{o})} \Big) \\
    & = \mu^2 \cdot \sum_{\substack{\bfm{o} \leq \bfm{t} \\ |\bfm{o}| \geq 2}}
    f(\bfm{o}).
  \end{align*}
  To complete the proof we aim to give a bound of the order $\ln^{8} n/n^2$ for the sum in the last expression above. To this end, we first show that  the whole sum is essentially dominated by those $f(\bfm{o})$ with $|\bfm{o}| = 2$.  Consider an arbitrary $\bfm{o}$ and let $\bfm{e}_z \in \{0, 1\}^k$ be the unit vector with $(\bfm{e}_z)_z = 1$ for  $z \in [k]$.  We derive
  \[
    \frac{f(\bfm{o})}{f(\bfm{o} + \bfm{e}_z)} \geq \frac{\binom{t_z}{o_z}
    \binom{s_z - t_z}{t_z - o_z}}{\binom{t_z}{o_z + 1} \binom{s_z - t_z}{t_z -
    (o_z + 1)} } \cdot (1 - p_z)^{\binom{o_z + 1}{2} - \binom{o_z}{2}} \cdot
    \prod_{j \neq z} (1 - p_{zj})^{(o_z + 1)o_j - o_zo_j}.
  \]
  Using the fact that $t_i \leq c(p_i)\ln n$ (by definition \eqref{eq:set-A} of
<  $\cA$) and $s_i \geq \alpha_i n/\ln^2 n$, this can be simplified (by standard
  manipulations of binomial coefficients $\binom{a}{b} = \binom{a}{b+1} \frac{b+1}{a-b}$ and $\binom{a+1}{2} = \binom{a}{2} + a$) to
  \begin{equation}\label{eq:ratio-between-deltas}
  \begin{split}
    \frac{f(\bfm{o})}{f(\bfm{o} + \bfm{e}_z)}
    & \geq
    \frac{ (o_z+1) (s_z -2t_z + o_z + 1)}{(t_z - o_z) (t_z-o_z)}
    \cdot (1 - p_z)^{o_z} \cdot \prod_{j \neq z} (1 - p_{zj})^{o_j} \\ & \geq
    \frac{\delta n}{\ln^4 n}
    \cdot (1 - p_z)^{o_z} \cdot \prod_{j \neq z} (1 - p_{zj})^{o_j},
  \end{split}
  \end{equation}
  for some constant $\delta > 0$ which depends only on $\alpha_i$'s and $p_i$'s.
  Let $\bfm{\tilde o} \leq \bfm{t}$ be such that $|\bfm{\tilde o}| = 2$ and
  $\bfm{\tilde o} = \bfm{e}_x + \bfm{e}_y$, for some (not necessarily distinct)
  $x, y \in [k]$. Then from \eqref{eq:ratio-between-deltas}, for every
  $\bfm{\tilde o} \leq \bfm{o} \leq \bfm{t}$ with $|\bfm{o}| \geq 3$, we obtain
  \[
    \frac{f(\bfm{\tilde o})}{f(\bfm{o})} \geq \Big( \frac{\delta n}{\ln^4 n}
    \Big)^{|\bfm{o}| - 2} \cdot \prod_{1 \leq i \leq k} (1 -
    p_i)^{\binom{o_i}{2}} \cdot \prod_{1 \leq i < j \leq k} (1 - p_{ij})^{o_i
    o_j}.
  \]
  This can be further bounded from below by
  \begin{multline}\label{eq:bound-on-quotients}
    \exp \Big( (|\bfm{o}| - 2) (\ln\delta - 4\ln\ln n) - 2\ln n \\
    + \underbrace{\sum_{1 \leq i \leq k} o_i \ln n + \frac{1}{2} \sum_{1 \leq i
    \leq k} o_i^2 \ln(1 - p_i) + \sum_{1 \leq i < j \leq k} o_io_j \ln(1 -
    p_{ij})}_{:= h(\bfm{o})} \Big).
  \end{multline}
  Let $\bfm{d} = \bfm{o} - \tilde{\bfm{o}}$ and $\eps = 7\ln\ln n/\ln n$.
  Since $\bfm{d} \leq \bfm{o} \leq \bfm{t}$ and $\bfm{t} \in (1 - \eps) \ln n
  \cdot \cA$, using the definition of $\cA$, we get
  \[
    \sum_{1 \leq i \leq k} \frac{d_i}{(1 - \eps) \ln n} + \frac{1}{2} \sum_{1
    \leq i \leq k} \frac{d_i^2}{(1 - \eps)^2 \ln^2 n} \ln(1 - p_i) + \sum_{1
    \leq i < j \leq k} \frac{d_i d_j}{(1 - \eps)^2 \ln^2 n} \ln(1 - p_{ij}) \geq
    0.
  \]
  Multiplying the whole inequality by $(1 - \eps)^2 \ln^2 n$ gives
  \begin{equation}\label{eq:lower-bound-on-h}
    h(\bfm{d}) := \sum_{1 \leq i \leq k} d_i \ln n + \frac{1}{2}\sum_{1 \leq i
    \leq k} d_i^2 \ln(1 - p_i) + \sum_{1 \leq i < j \leq k} d_i d_j \ln (1 -
    p_{ij}) \geq \eps |\bfm{d}| \ln n.
  \end{equation}
  On the other hand, since $\bfm{d} = \bfm{o} - \bfm{\tilde o}$ and $\bfm{\tilde
  o} = \bfm{e}_x + \bfm{e}_y$, we have
  \begin{multline*}
    h(\bfm{o}) - 2 \ln n \ge h(\bfm{d}) - 2\ln n + (|e_x| + |e_y|)\ln n +
    \frac{1}{2} \big( 2o_x \ln(1 - p_x) + 2o_y \ln(1 - p_y) \big) \\
    + \sum_{j \neq x} e_x o_j \ln(1 - p_{xj}) + \sum_{j \neq y} e_y o_j \ln(1
    - p_{yj}).
  \end{multline*}
  Therefore, $h(\bfm{o}) - 2\ln n \geq h(\bfm{d}) - O( |\bfm{o}|)$. By plugging
  in \eqref{eq:lower-bound-on-h} into \eqref{eq:bound-on-quotients}, and as
  $|\bfm{o}| \geq 3$, we get
  \[
  \begin{split}
    \frac{f(\bfm{\tilde o})}{f(\bfm{o})}
    & \geq \exp \Big( (|\bfm{o}| - 2)
    (\ln\delta - 4\ln\ln n) + 7(|\bfm{o}| - 2) \ln\ln n - O(|\bfm{o}|) \Big) \\
    &\geq \ln^{3(|\bfm{o}|-2)} n + o( \ln^{|\bfm{o}|-2} n) .
  \end{split}
  \]
  Clearly, by the fact that $\bfm{t} \in (1 - \eps)\ln n \cdot \cA$ and as
  $\cA$ is bounded, the norm of $\bfm{o}$ is at most $C\ln n$ for some (large)
  constant $C > 0$ depending only on $\alpha_i$'s and $p_i$'s; we may choose, for example, $C = \sum_{1 \leq i \leq k} c(p_{i})$. This finally implies
  \[
    \sum_{\substack{\bfm{o} \le \bfm{t} \\ |\bfm{o}| \ge 2}} f(\bfm{o})
    = O\Big(\sum_{2 \le i \le |\bfm{t}|} k^{i-2} \ln^{-3(i-2)} n \sum_{|\bfm{\tilde o}|=2}
    f( \bfm{\tilde o}) \Big) = O \Big( \sum_{ |\bfm{\tilde o}| = 2} f(\bfm{\tilde
    o}) \Big).
  \]
  Hence, it remains to show that $f(\bfm{\tilde o}) = O(\ln^8 n/n^2)$ when $|\bfm{\tilde o}| = 2$. Note that each such $\bfm{\tilde o}$ can be written  as $\bfm{\tilde o} = \bfm{e}_x + \bfm{e}_y$ for some $x, y \in [k]$. In the case $x \neq y$  we obtain
  \[
    f(\bfm{\tilde o}) \le t_x \frac{\binom{s_x - t_x}{t_x - 1}}{\binom{s_x}{t_x}}
    \cdot t_y \frac{\binom{s_y - t_y}{t_y - 1}}{\binom{s_y}{t_y}} \cdot (1 -
    p_{xy})^{-1}.
  \]
  Simple manipulations with binomial coefficients give
  \[
    f(\bfm{\tilde o}) \leq \frac{(t_x t_y)^2}{s_x s_y (1 - p_{xy})}.
  \]
  On the other hand, if $x = y$ (and so $\tilde o_x = 2$) we obtain
  \[
    f(\bfm{\tilde o}) = \binom{t_x}{2} \frac{\binom{s_x - t_x}{t_x -
    2}}{\binom{s_x}{t_x}} \cdot (1 - p_x)^{-1} \leq \frac{t_x^4}{s_x^2 (1 -
    p_x)}.
  \]
  Recalling that $s_i \geq \alpha_i n/\ln^2 n$ and $t_i = O(\ln n)$ shows the
  desired bound and completes the proof of the lemma.
\end{proof}

The next lemma establishes the fact that $\Gnp$ w.h.p.\ contains no independent
$\bfm{t}$-sets which lie ``outside'' of $\ln n \cdot \cA$. We start by making a useful
observation about $\cA$.
\begin{claim}\label{claim:smallvecsinA}
  There exists a constant $C=C(P)>0$ such that any vector $\bfm{c} \in
  {\R}_{\geq 0}^k$ with $|\bfm{c}|\leq C$ is contained in $\cA$.
\end{claim}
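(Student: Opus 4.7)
The plan is straightforward: the linear part of $g(\bfm{c}, I)$ is non-negative, and the quadratic part, although non-positive (since each $\ln(1-p_{ij}) < 0$), is of order $|\bfm{c}|^2$. Hence for sufficiently small $|\bfm{c}|$ the linear term will dominate.

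More concretely, I would set
\[
    M := \max_{i, j \in [k]} |\ln(1 - p_{ij})|,
\]
which is a finite, strictly positive constant depending only on $P$ because each $p_{ij}$ lies in $(0, 1)$. For any $\varnothing \ne I \subseteq [k]$ and any $\bfm{c} \in \R_{\geq 0}^k$, I would then estimate
\[
    g(\bfm{c}, I)
    \;=\; \sum_{i \in I} c_i + \tfrac{1}{2} \sum_{i, j \in I} c_i c_j \ln(1 - p_{ij})
    \;\geq\; \sum_{i \in I} c_i - \tfrac{M}{2} \Big( \sum_{i \in I} c_i \Big)^2
    \;=\; \Big( \sum_{i \in I} c_i \Big) \Big( 1 - \tfrac{M}{2} \sum_{i \in I} c_i \Big).
\]
Since $\sum_{i \in I} c_i \leq |\bfm{c}|$, choosing $C := 2/M$ guarantees that whenever $|\bfm{c}| \leq C$ the second factor is non-negative, so the whole expression is non-negative. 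This holds for every non-empty $I \subseteq [k]$, which by the definition \eqref{eq:set-A} of $\cA$ yields $\bfm{c} \in \cA$.

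There is no real obstacle here; the only thing to be careful about is that $M$ is well-defined and positive, which is ensured by the standing assumption that all $p_{ij}$ are strictly between $0$ and $1$ (so the probabilities are neither $0$ nor $1$). The constant $C = 2 / \max_{i,j} |\ln(1 - p_{ij})|$ depends only on $P$, as required.
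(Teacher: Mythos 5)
Your argument is correct and follows essentially the same route as the paper: bound the quadratic form by replacing each $\ln(1-p_{ij})$ with the worst-case value $-M = \ln(1-\max_{i,j} p_{ij})$, then factor out $\sum_{i\in I} c_i$ and use $\sum_{i\in I} c_i \leq |\bfm{c}|$ to get a constant $C = 2/M$ that depends only on $P$. In fact your version is slightly tidier than the paper's, since you keep $\sum_{i\in I} c_i$ as the common factor rather than prematurely replacing it by $|\bfm{c}|$ in the linear term (which, taken literally, would be an inequality in the wrong direction).
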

\begin{proof}
  For any $I\subseteq [k]$, we have
  \[
    g(\bfm{c}, I) \geq \sum_{i\in I} c_i + \frac12 \sum_{i', j'} c_{i'} c_{j'}
    \ln(1-\max_{i,j} p_{ij}) \geq |\bfm{c}| + \frac12 |\bfm{c}|^2
    \ln(1-\max_{i,j} p_{ij}).
  \]
  It follows that $g(\bfm{c},I) \geq 0$ for all $I\subseteq [k]$ if $|\bfm{c}|
  \leq C:=-2/\ln(1-\max_{i,j} p_{ij})$.
\end{proof}

\begin{lemma}\label{lem:no-ind-set-out-of-A}
  Let $G \sim \Gnp$. Let $X$ be the random variable denoting the number of
  independent $\bfm{t}$-sets in $G$ with $\bfm{t} \in \N_0^k \setminus (\ln n
  \cdot \cA)$. Then
  \[
    \Pr[X > 0] \leq O \big( n^{-1} \big).
  \]
\end{lemma}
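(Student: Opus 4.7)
The plan is a union bound over all types $\bfm{t} \in \N_0^k \setminus (\ln n \cdot \cA)$, combined with Markov's inequality and a \emph{projection trick}: for any nonempty $I \subseteq [k]$, an independent $\bfm{t}$-set $S$ in $G$ restricts to an independent $\bfm{t}_I$-set in $G[V_I]$, where $\bfm{t}_I := (t_i)_{i \in I}$ and $V_I := \bigcup_{i \in I} V_i$. Writing $Y_{\bfm{t}, I}$ for the number of such $\bfm{t}_I$-sets in $G[V_I]$, this gives
\[
  \Pr[\text{$G$ contains a $\bfm{t}$-set}] \le \Pr[Y_{\bfm{t}, I} \ge 1] \le \E[Y_{\bfm{t}, I}].
\]

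I would split the types $\bfm{t}$ into a \emph{large regime} $|\bfm{t}| > M \ln n$ and a \emph{moderate regime} $|\bfm{t}| \le M \ln n$, for a suitably large constant $M = M(P)$. In the large regime, the crude bounds $\binom{\alpha_i n}{t_i} \le n^{t_i}$ together with $\sum_i \binom{t_i}{2} \ln(1 - p_i) + \sum_{i < j} t_i t_j \ln(1 - p_{ij}) \le \binom{|\bfm{t}|}{2} \ln(1 - p_{\min})$, where $p_{\min} := \min_{i, j} p_{ij}$, yield
\[
  \ln \E[\text{\# $\bfm{t}$-sets}] \le |\bfm{t}| \ln n + \binom{|\bfm{t}|}{2} \ln(1 - p_{\min}) \le -\Omega(|\bfm{t}|^2)
\]
once $M$ is large enough in terms of $p_{\min}$. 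Summing over the at most $(n + 1)^k$ such types contributes $o(n^{-1})$.

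For the moderate regime, set $\bfm{c} := \bfm{t}/\ln n$ and pick a nonempty $I \subseteq \{i : c_i > 0\}$ with $g(\bfm{c}, I) < 0$; this is possible since $\bfm{c} \notin \cA$, and zero coordinates leave $g(\cdot, I)$ unchanged. A Stirling estimate $\binom{\alpha_i n}{t_i} \le (e \alpha_i n / t_i)^{t_i}$ applied to the formula~\eqref{eq:expectation} restricted to $I$ then gives
\[
  \ln \E[Y_{\bfm{t}, I}] \le \ln^2 n \cdot g(\bfm{c}, I) - |\bfm{c}_I| \ln n \ln\ln n + O(\ln n),
\]
where $|\bfm{c}_I| := \sum_{i \in I} c_i$. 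Applying Claim~\ref{claim:smallvecsinA} to the principal submatrix $(p_{ij})_{i, j \in I}$ furnishes a constant $C_0 = C_0(P) > 0$ for which $g(\bfm{c}, I) < 0$ forces $|\bfm{c}_I| > C_0$; combined with $g(\bfm{c}, I) \le 0$ this bounds $\ln \E[Y_{\bfm{t}, I}] \le -\Omega(\ln n \ln\ln n)$, i.e.\ $\E[Y_{\bfm{t}, I}] \le n^{-\omega(1)}$. Since the moderate regime contains at most $O(\ln^k n)$ types, the corresponding union bound is also $o(n^{-1})$.

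The main technical delicacy is the $\ln\ln n$ factor from Stirling. The type $\bfm{t}$ may sit within distance $O(1/\ln n)$ of the boundary of $\cA$, so $g(\bfm{c}, I)$ could be only of order $-1/\ln n$, making the leading term $\ln^2 n \cdot g(\bfm{c}, I)$ just $-\Theta(\ln n)$; this alone would yield merely a polynomial bound $n^{-O(1)}$, too weak for the union bound. The uniform lower bound $|\bfm{c}_I| > C_0$ provided by Claim~\ref{claim:smallvecsinA} is precisely what activates the $-|\bfm{c}_I| \ln n \ln\ln n$ contribution and lifts the estimate to $n^{-\omega(1)}$.
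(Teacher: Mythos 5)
Your proof is correct and shares the paper's central mechanism: for each type $\bfm{t}$ outside $\ln n \cdot \cA$, pick a nonempty $I$ with $g(\bfm{t}/\ln n, I) < 0$, project to the coordinates in $I$, bound the expected count via Markov and a Stirling estimate, and invoke Claim~\ref{claim:smallvecsinA} to force $|\bfm{t}_I| = \Omega(\ln n)$ — this is exactly what activates the $-\Omega(\ln n\ln\ln n)$ term and lifts the per-type estimate to super-polynomial decay. Where you diverge is the union-bound bookkeeping. The paper observes that any out-of-$\cA$ independent set contains one of type $\bfm{m}$ for a \emph{minimal} $\bfm{m} \in \N_0^k \setminus (\ln n \cdot \cA)$; boundedness of $\cA$ gives $|\bfm{m}| = O(\ln n)$ for every minimal $\bfm{m}$, so it suffices to union bound over $O(\ln^k n)$ vectors. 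You instead split into a moderate band $|\bfm{t}| \le M\ln n$ (again $O(\ln^k n)$ types, each handled essentially as the paper handles a minimal $\bfm{m}$) and a large band $|\bfm{t}| > M\ln n$, where the $(n+1)^k$ remaining types are absorbed by the crude $e^{-\Omega(|\bfm{t}|^2)}$ bound coming from $\binom{|\bfm{t}|}{2}\ln(1-p_{\min})$. Both routes give $o(n^{-1})$. The paper's minimality trick is slicker in that all the work happens on a single bounded band, while your two-regime split is slightly longer but equally rigorous, and has the minor advantage that the large regime needs no reference to the geometry of $\cA$ at all.
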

\begin{proof}
  Fix any $\bfm{t} \notin \N_0^k \setminus (\ln n \cdot \cA)$ and let
  $X_{\bfm{t}}$ count the number of independent $\bfm{t}$-sets for that fixed
  $\bfm{t}$. Observe that, by the definition \eqref{eq:set-A} of $\cA$, there is
  a $I \subseteq [k]$ such that $g \big( (t_i/\ln n)_{i \in I}, I \big) < 0$.
  Let $\bfm{\tilde t} = (\tilde t_1, \dotsc, \tilde t_k)$ be defined as
  \[
    \tilde t_i =
    \begin{cases}
      t_i, & \text{ if $i \in I$}, \\
      0, & \text{ otherwise},
    \end{cases}
  \]
  and let $X_{\bfm{\tilde t}}$ be the random variable denoting the number of
  independent $\bfm{\tilde t}$-sets in $G$. Then
  \[
    \E[X_{\bfm{\tilde t}}] = \prod_{i \in I} \binom{\alpha_i n}{t_i} \cdot
    \prod_{i \in I} (1 - p_i)^{\binom{t_i}{2}} \cdot \prod_{\substack{i, j \in I
    \\ i \neq j}} (1 - p_{ij})^{t_i t_j}.
  \]
  Since $\alpha_i < 1$, using that $\binom{n}{k} \le (\frac{en}{k})^k$, this can
  further be bounded by
  \[
    \E[X_{\bfm{\tilde t}}] \leq \exp \Big( \sum_{i \in I} t_i + \sum_{i \in I}
    t_i \ln n - \sum_{i \in I} t_i \ln t_i + \frac{1}{2}\sum_{i \in I} t_i^2
    \ln(1 - p_i) + \sum_{\substack{i, j \in I \\ i \neq j}} t_i t_j \ln(1 -
    p_{ij}) \Big).
  \]
  Recall, $g \big( (t_i/\ln n)_{i \in I}, I \big) < 0$, and thus
  \[
    \sum_{i \in I} \frac{t_i}{\ln n} + \frac{1}{2} \sum_{i \in I}
    \frac{t_i^2}{\ln^2 n} \ln(1 - p_i) + \sum_{\substack{i, j \in I \\ i \neq
    j}} \frac{t_i t_j}{\ln^2 n} \ln(1 - p_{ij}) < 0.
  \]
  which yields
  \[
    \frac{1}{2} \sum_{i \in I} t_i^2 \ln(1 - p_i) + \sum_{\substack{i, j \in I
    \\ i \neq j}} t_i t_j \ln(1 - p_{ij}) < - \sum_{i \in I} t_i \ln n.
  \]
  By Jensen's inequality and the fact that $|\bfm{\tilde t}| = \Omega(\ln n)$ by
  Claim~\ref{claim:smallvecsinA}, it further follows that
  \[
    \E[X_{\bfm{\tilde t}}] \leq \exp \big( -|\bfm{\tilde t}|\ln(|\bfm{\tilde
    t}|/k) + |\bfm{\tilde t}| \big) \leq O(n^{-2}).
  \]
  Hence, by Markov's inequality $\Pr[X_{\bfm{\tilde t}} > 0] \leq
  \E[X_{\bfm{\tilde t}}] \leq O(n^{-2})$.

  Let $\mathcal{M}$ be the set of minimal $\bfm{t}$'s in $\N_0^k \setminus
  (\ln n \cdot \cA)$. Then if there is any independent set in $\N_0^k \setminus
  (\ln n \cdot \cA) $ it must also contain an $\bfm{m}$-set as a subset, for
  some $\bfm{m} \in \mathcal{M}$. Similarly as in the proof of
  Lemma~\ref{lem:non-existance-bound} (as $\cA$ is bounded), we know $|\bfm{m}|
  \le C \ln n$ for some $C > 0$ depending only on $\alpha_i$'s and $p_i$'s.
  Therefore, $|\mathcal{M}| \le (C\ln n)^k$ and by a union bound over all vectors
  in $\cM$, we get
  \[
    \Pr[X > 0] \leq \sum_{\bfm{m} \in \cM} \Pr[X_\bfm{m} > 0 \big] \leq O(n^{-1}).
  \]
  In particular, w.h.p.\ there is also no independent $\bfm{t}$-set for any
  $\bfm{t} \in {\N}_0^k \setminus (\ln n \cdot \cA)$, which completes the
  proof.
\end{proof}

\section{Chromatic Number}\label{sec:chi}

In this section we provide the proof of our main theorem. Recall, the goal is to
give a precise (up to lower order terms) bound on the chromatic number of a
random block graph $\Gnp$. For the convenience of the reader we restate our main
result.

\chromaticnumber*

\subsection{Upper bound}

Set $\eps = 7\ln\ln n/\ln n$ and let $\bfm{c^\star} = \argmax\big\{ |\bfm{c}|
: \bfm{c} \in \conv(\cA) \cap \{\balpha t : t \in \R_{\geq 0}\} \big\}$. While
constructing a coloring to give an upper bound on $\chi(G)$ we need to
distinguish two cases: $\bfm{c^\star} \in \cA$ and $\bfm{c^\star} \notin \cA$.

Assume the former, that is, $\bfm{c^\star} \in \cA$ and let $\bfm{t} = (1 - \eps)
\ln n \cdot \bfm{c^\star}$. The probability that there is a set $S \subseteq
V(G)$ with $s_i := |S \cap V_i| = \alpha_i n/\ln^2 n$ for all $i \in [k]$ and
such that $G[S]$ does not contain an independent $\bfm{t}$-set is, by
Lemma~\ref{lem:non-existance-bound} and a union bound over all choices for $S$,
at most
\[
  \prod_{1 \leq i \leq k} \binom{\alpha_i n}{s_i} e^{-\Omega(n^2/\ln^{8} n)}
  \leq e^{\sum_{i = 1}^{k} s_i \ln n} \cdot e^{-\Omega(n^2/\ln^{8} n)} =
  o(1).
\]
As a direct consequence, w.h.p.\ as long as there are at least $\alpha_i
n/\ln^2 n$ vertices remaining in every $V_i$, there is an independent
$\bfm{t}$-set where $\bfm{t} = (1 - \eps) \ln n \cdot \bfm{c^\star}$. We
construct the coloring in the usual way: repeatedly take out an independent
$\bfm{t}$-set and assign all the vertices in it a new color. By the argument
above, this is possible until there are at most $\alpha_i n/\ln^2 n$ vertices left in every set $V_i$ and $O(n/\ln^2 n)$ vertices in total, which happens simultaneously since the vector $\bfm{c}^\star$ is proportional to $\balpha$. At this point we assign to each of those uncolored vertices a color different from
all the previously used ones. Therefore, the total number of colors used is at
most
\[
  \frac{n}{|\bfm{t}|} + O\Big(\frac{n}{\ln^2 n}\Big) = \frac{n}{|\bfm{c^\star}|(\ln n -
  7\ln\ln n)} + O\Big(\frac{n}{\ln^2 n}\Big) = (1 + o(1)) \frac{n}{c^\star \ln n},
\]
as claimed.

On the other hand, if $\bfm{c^\star} \notin \cA$, w.h.p.\ no independent
$(\bfm{c^\star} \ln n)$-set exists in $G$ by
Lemma~\ref{lem:no-ind-set-out-of-A}. In order to circumvent this, we represent
$\bfm{c^\star}$ as a convex combination
\begin{equation}\label{eq:c-star-conv-comb}
  \bfm{c^\star} = \sum_{1 \leq i \leq k + 1} \lambda_i \bfm{t_i},
\end{equation}
where $\bfm{t_i} \in \cA$, $\lambda_i \in [0, 1]$ for all $i \in [k + 1]$, and
$\sum_{1 \leq i \leq k + 1} \lambda_i = 1$.

We then construct a coloring of $G$ as follows. Greedily and sequentially
select $\lambda_i n/(c^\star \ln n)$ independent $(\bfm{t_i} (1 - \eps)\ln
n)$-sets and assign all the vertices in it a new color. Note that throughout this greedy process, the number of uncolored vertices in every $V_i$ is at least
\begin{align*}
  \alpha_i n - \sum_{1 \leq j \leq k + 1} \frac{\lambda_j n}{c^\star \ln n}
  \cdot (\bfm{t_j})_i \cdot (1 - \eps) \ln n & = \alpha_i n - \frac{(1 -
  \eps)n}{c^\star} \cdot \sum_{1 \leq j \leq k + 1} \lambda_j (\bfm{t_j})_i \\
  & \osref{\eqref{eq:c-star-conv-comb}}= \alpha_i n - \frac{(1 - \eps)
  n}{c^\star} \cdot (\bfm{c^\star})_i = \frac{7 \alpha_i n \ln\ln n}{\ln n},
\end{align*}
where the last equality follows from the fact that $\bfm{c^\star} = c^\star
\cdot \balpha$ and our choice of $\eps$. This is enough for Lemma~\ref{lem:non-existance-bound} to apply and we w.h.p.\ find a next independent $(\bfm{t_i}(1-\eps)\ln n)$-set.

Let $Q_i$ be the set of uncolored vertices in every $V_i$. Since $G[V_i]$ is
distributed as $G(\alpha_i n, p_i)$ and $Q_i$ is a subset of $V_i$ of size
$|Q_i| \geq \eps\alpha_i n$ w.h.p.\ by Lemma~\ref{lem:non-existance-bound} it
follows that as long as there are at least $\alpha_i n/\ln^2 n$ vertices
remaining, generously rounding for $\balpha$, we find an independent set of size
at least $c(p_i)\ln n/2$ in $G[Q_i]$. We greedily take such sets one by one and
assign all the vertices in each a new color. Lastly, assign every uncolored
vertex a new color which was previously unused. Therefore,
\[
  \chi(G[Q_i]) \leq \frac{14 \alpha_i n \ln\ln n}{c(p_i) \ln^2 n} +
  \frac{n}{\ln^2 n} = o \Big( \frac{n}{\ln n} \Big).
\]
Consequently, the number of different colors used for the whole graph $G$ is at
most
\[
  \sum_{1 \leq i \leq k + 1} \frac{\lambda_i n}{c^\star \ln n} + \sum_{1 \leq i
  \leq k} \chi(G[Q_i]) = \frac{n}{c^\star \ln n} + o \Big( \frac{n}{\ln n}
  \Big),
\]
as $\sum_{1 \leq i \leq k + 1} \lambda_i = 1$. This confirms the claimed upper
bound.

\subsection{Lower bound}

Set $N = \chi(G)$ and consider any coloring with color classes $S_1, \dotsc,
S_N$. Trivially, for every $j \in [k]$ we have $\sum_{1 \leq i \leq N} |S_i \cap
V_j| = \alpha_j n$. So by Lemma~\ref{lem:no-ind-set-out-of-A} we may assume
every color class is an independent $\bfm{t}$-set for some $\bfm{t} \in \ln n
\cdot \cA$. Let $\overline{\bfm{t}} = \frac{1}{N} \sum_{i \in [N]} \bfm{t}_i$ and
note that $\overline{\bfm{t}} \in \{t \cdot \balpha\}$ for some $t \in \R_{\geq 0}$
and $\overline{\bfm{t}} \in \conv(\cA)$. Therefore,
\[
  N = \frac{n}{|\overline{\bfm{t}}| \ln n} \geq \frac{n}{c^\star \ln n},
\]
by maximality of $c^\star$, see \eqref{eq:c-star}. \qed

\section{Special Cases}\label{sec:specialcases}

\subsection{Two-block case}\label{sec:twoblock}

Throughout this subsection we assume that $k = 2$ and try to in detail describe
the possible structure of the set $\cA$ and independent sets of $\Gnp$. Recall,
the set $\cA$ is defined in order to describe ``feasible'' sizes of independent
sets the graph $\Gnp$ can have and in the case $k = 2$ is given as follows:
\begin{align*}
  \cA = \big\{ & 0 \leq c_1 \leq c(p_1), 0 \leq c_2 \leq c(p_2), \\
  & c_1 + c_2 + \frac{c_1^2}{2} \ln(1 - p_1) + \frac{c_2^2}{2} \ln(1 - p_2) +
  c_1c_2 \ln(1 - p_{12}) \geq 0 \big\}.
\end{align*}
The first two equations determine the size of the largest independent set inside
each of the parts $V_1$ and $V_2$ on their own by treating them as $G(\alpha_1
n, p_1)$ and $G(\alpha_2 n, p_2)$, respectively. The third inequality is what
determines the shape of $\cA$.

In particular, having $p_1$ and $p_2$ fixed, the shape of $\cA$ varies
significantly depending on $p_{12}$. Note that, by using \eqref{eq:chiGnp}, the
inequality
\[
  c_1 + c_2 + \frac{c_1^2}{2} \ln(1 - p_1) + \frac{c_2^2}{2} \ln(1 - p_2) +
  c_1c_2 \ln(1 - p_{12}) \geq 0
\]
can be rewritten as
\begin{equation}\label{eq:kequaltwo}
  (c_1 + c_2) \Big( 1 - \frac{c_1}{c(p_1)} - \frac{c_2}{c(p_2)} \Big) +
  c_1c_2 \Big( \ln(1 - p_{12}) + \frac{1}{c(p_1)} + \frac{1}{c(p_2)} \Big)
  \geq 0.
\end{equation}
If we set $c_1$ or $c_2$ to $0$ we quickly see that $(c(p_1) ,0)$ and $(0,c(p_2))$ are points on the boundary of $\cA$. If we set \eqref{eq:kequaltwo} to be zero we get the boundary between $(c(p_1) ,0)$ and $(0,c(p_2))$, which must be part of a conic section as it satisfies a quadratic equation. So it must be concave or convex. In particular it is enough to check whether the points on the line between $(c(p_1) ,0)$ and $(0,c(p_2))$, the line represented by $1 - c_1/c(p_1) - c_2/c(p_2) = 0$, are in $\cA$ or not. Because we are only considering points where $c_1$ and $c_2$ are positive, this line is contained in $\cA$ if and only if the second term of \eqref{eq:kequaltwo} is positive, that is, if $\ln(1 - p_{12}) + 1/c(p_1) + 1/c(p_2) \geq 0$, or, equivalently, $p_{12} \leq 1 - \sqrt{(1 - p_1)(1 - p_2)}$. Then and only then is $\cA$ convex.
In this case, the constant $c^\star$ defined as
\[
  c^\star = \max\big\{ |\bfm{c}| : \bfm{c} \in \conv(\cA) \cap \{\balpha t
  : t \in \mathbb{R}_{\geq 0} \} \big\}
\]
is given by a vector $\bfm{c^\star}$ which actually belongs to the set $\cA$
itself. In other words, independent $\bfm{t}$-sets with $t_1 = c^\star_1 (1 -
o(1))\ln n$ and $t_2 = c^\star_2 (1 - o(1))\ln n$ w.h.p.\ exist in $\Gnp$, and
a coloring can be found by greedily picking these sets as long as possible and
then coloring all remaining vertices with a new color.

On the other hand, if $p_{12} > 1 - \sqrt{(1 - p_1)(1 - p_2)}$, the situation is
quite different. In this case, the set $\cA$ is {\em concave} and the vector
$\bfm{c^\star}$ which determines the constant $c^\star$ does not belong to the
set $\cA$, but lies on its convex hull. In particular, it is given by
\[
  \bfm{c^\star} = \Big( \frac{\alpha_1}{\frac{\alpha_1}{c(p_1)} +
  \frac{\alpha_2}{c(p_2)}}, \frac{\alpha_2}{\frac{\alpha_1}{c(p_1)} +
  \frac{\alpha_2}{c(p_2)}} \Big).
\]
The optimal coloring is then achieved by looking at the ``extremal points''
$\bfm{c_1} = (c(p_1), 0)$ and $\bfm{c_2} = (0, c(p_2))$ and using independent
$(\bfm{c_1}(1 - \eps)\ln n)$-sets and $(\bfm{c_2}(1 - \eps)\ln n)$-sets as
color classes. Perhaps unsurprisingly, it is shown in
Proposition~\ref{prop:chrom-union} below that w.h.p.\ the chromatic number of
$\Gnp$ is then and only then the sum of the chromatic number of the two parts
$G[V_1]$ and $G[V_2]$, that is
\[
  \chi(\Gnp) = \big(1 + o(1)\big) \big(\chi(G(\alpha_1 n, p_1)) +
  \chi(G(\alpha_2 n, p_2)) \big) = \big(1 + o(1)\big) \frac{\alpha_1c(p_2) +
  \alpha_2c(p_1)}{c(p_1)c(p_2)} \frac{n}{\ln n}.
\]
For $p_{12} = 1 - \sqrt{(1 - p_1)(1 - p_2)}$ we have that $\cA$ is both convex
and concave since it is limited by a line---so any convex combination of
$\bfm{c}$-sets along this line yields a correct chromatic number asymptotically.

The shape of the set $\cA$ for fixed $0 < p_1 \leq p_2 < 1$ and depending on
$p_{12} \in (0, 1)$ is depicted on Figure~\ref{fig:figure-A} below.

\begin{figure}[!htbp]
  \captionsetup[subfigure]{labelformat=empty, textfont=scriptsize}
  \centering
  \begin{subfigure}{.5\textwidth}
    \centering
    \includegraphics[scale=0.7]{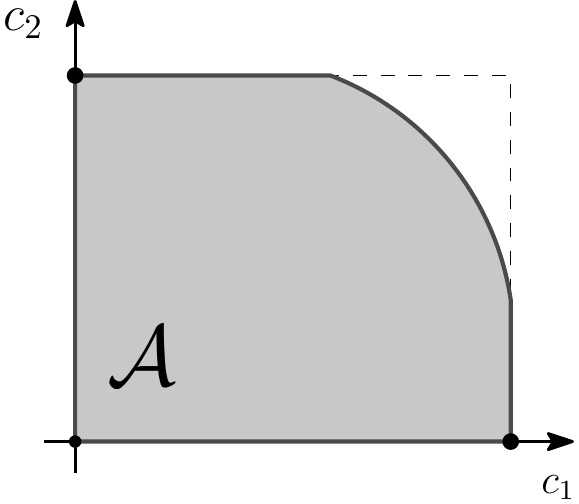}
    \caption{$0 < p_{12} < 1 - \sqrt{1 - p_1}$}
    \label{fig:sub-A}
  \end{subfigure}%
  \hfill
  \begin{subfigure}{.5\textwidth}
    \centering
    \includegraphics[scale=0.7]{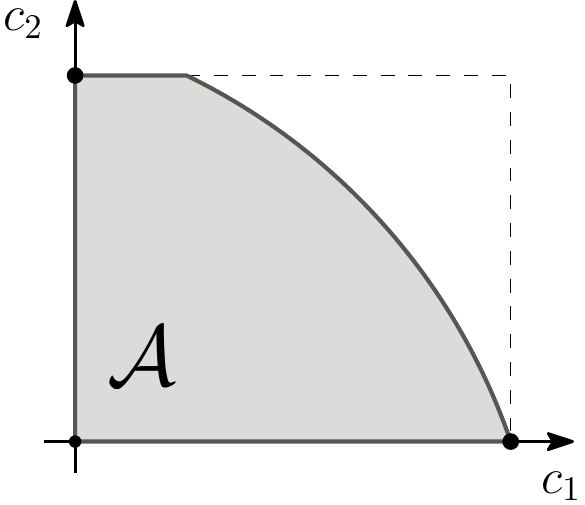}
    \caption{$1 - \sqrt{1 - p_1} \le p_{12} < 1 - \sqrt{1 - p_2}$}
    \label{fig:sub-B}
  \end{subfigure}%
  \vspace{2em}
  \begin{subfigure}{.34\textwidth}
    \centering
    \includegraphics[scale=0.7]{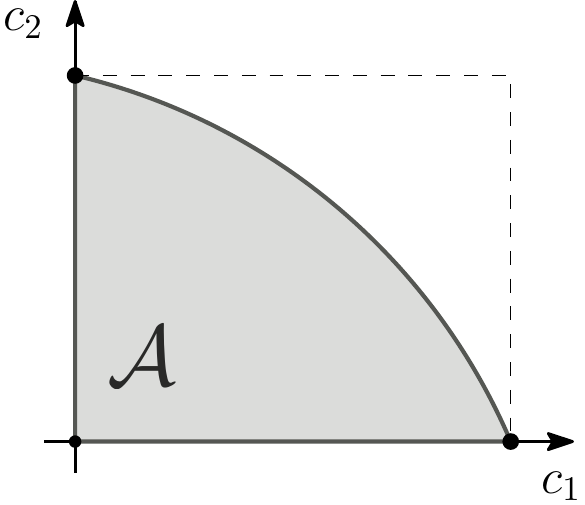}
    \caption{$1 - \sqrt{1 - p_2} \le p_{12} < 1 - \sqrt{(1 - p_1)(1 - p_2)}$}
    \label{fig:sub-C}
  \end{subfigure}%
  \hfill
  \begin{subfigure}{.33\textwidth}
    \centering
    \includegraphics[scale=0.7]{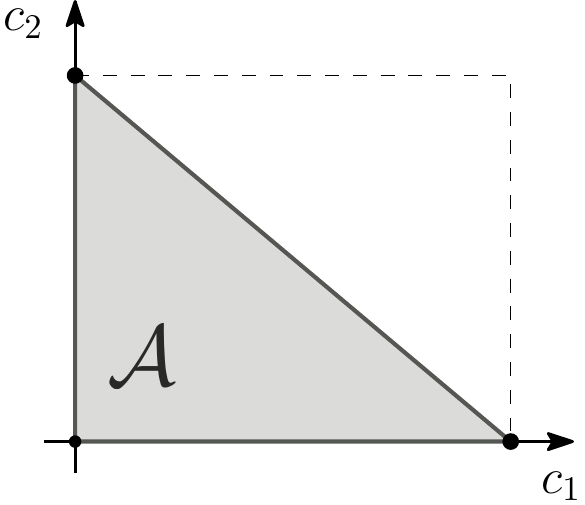}
    \caption{$p_{12} = 1 - \sqrt{(1 - p_1)(1 - p_2)}$}
    \label{fig:sub-D}
  \end{subfigure}%
  \hfill
  \begin{subfigure}{.33\textwidth}
    \centering
    \includegraphics[scale=0.7]{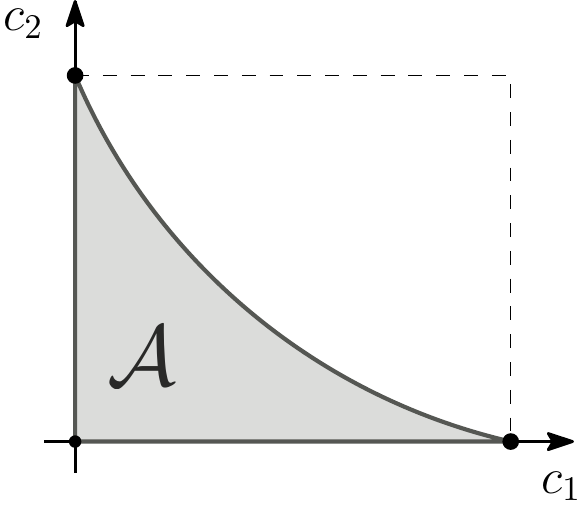}
    \caption{$1 - \sqrt{(1 - p_1)(1 - p_2)} < p_{12} < 1$}
    \label{fig:sub-E}
  \end{subfigure}
  \caption{Possibilities for $\cA$ in case $k = 2$, assuming $p_1 \leq p_2$ and
  then varying $p_{12}$. The dashed lines correspond to $c(p_1)$ and $c(p_2)$,
  that is, $c_1 = \frac{2}{-\ln(1 - p_1)}$ and $c_2 = \frac{2}{-\ln(1 - p_2)}$.}
  \label{fig:figure-A}
\end{figure}

Clearly, the constant $c^\star$ and the vector $\bfm{c^\star}$ that defines it
do not only depend on the set $\cA$ but also on the vector $\balpha$. In
Figure~\ref{fig:figure-alpha} we show how the vector $\bfm{c^\star}$ is defined.

\begin{figure}[!htbp]
  \captionsetup[subfigure]{labelformat=empty, textfont=scriptsize}
  \centering
  \begin{subfigure}{.33\textwidth}
    \centering
    \includegraphics[scale=0.7]{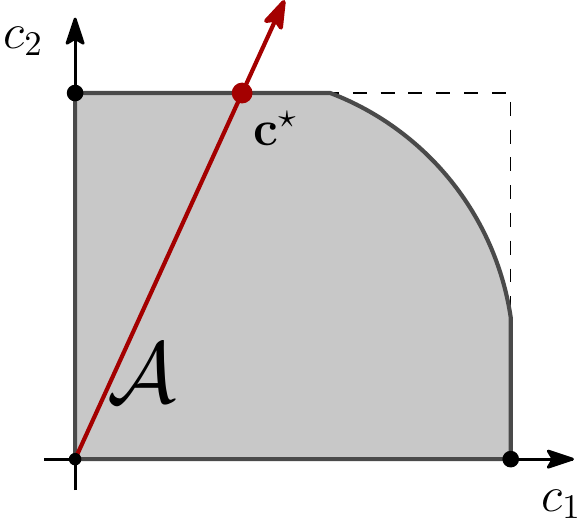}
  \end{subfigure}%
  \hfill
  \begin{subfigure}{.33\textwidth}
    \centering
    \includegraphics[scale=0.7]{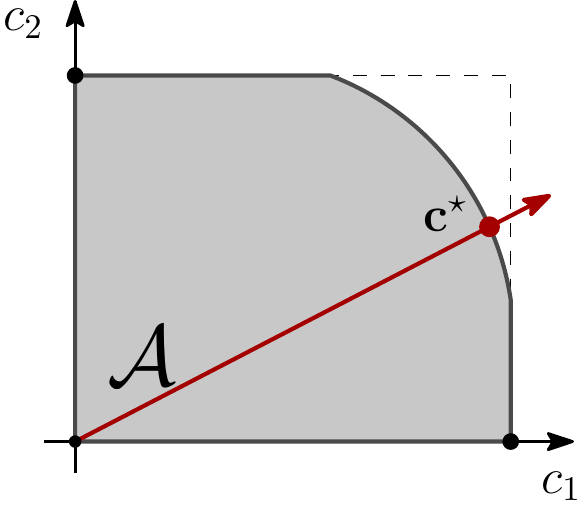}
  \end{subfigure}%
  \hfill
  \begin{subfigure}{.33\textwidth}
    \centering
    \includegraphics[scale=0.7]{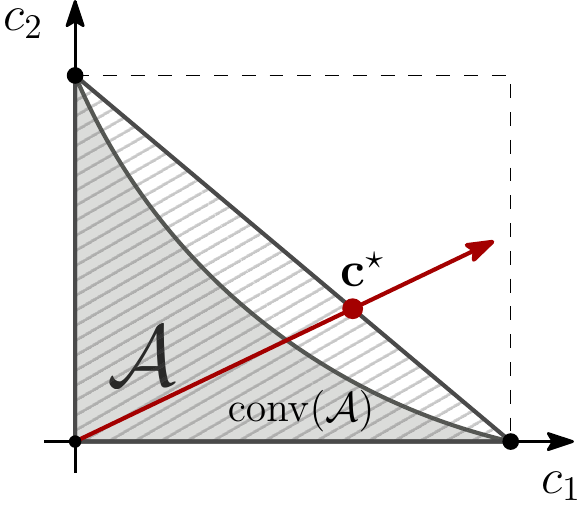}
  \end{subfigure}
  \caption{The red line represents the vector $\balpha$. The red point
  represents its intersection with $\conv(\cA)$, i.e.\ the vector
  $\bfm{c^\star}$.}
  \label{fig:figure-alpha}
\end{figure}

Worth noting is that if $p_{12} < 1 - \sqrt{(1 - p_1)}$ or $p_{12} < 1 -
\sqrt{(1 - p_2)}$ the inequalities $c_i < c(p_i)$ become relevant for the shape of
$\cA$. What this means for the chromatic number is that if
\[
  p_{12} \le 1 - \sqrt{(1 - p_1)} \quad \text{ and } \quad \alpha_1 \le c(p_1)
  \big( \ln(1-p_{12}) - \tfrac{1}{2} \ln (1-p_2) \big),
\]
then the vertex set $V_1$ has become so sparse and small, that we can color it
for free. So the chromatic number of $\Gnp$ is asymptotically bounded by the
chromatic number of $G[V_2]$ and is therefore
\[
  \chi(\Gnp) = \big(1 + o(1)\big) \frac{\alpha_2n}{c(p_2)\ln n} .
\]
The equivalent holds if $p_{12} \le 1 - \sqrt{(1 - p_2)}$ and $\alpha_2 \le
c(p_2) \big( \ln(1-p_{12}) - \frac{1}{2} \ln(1-p_1) \big)$ for the vertex set $V_2$.

\subsection{Concave set and the union of random graphs}

In this subsection we further explore how the random block graph $\Gnp$ behaves
in the general case $k \geq 2$ and when
\[
  p_{ij} > 1 - \sqrt{(1 - p_i)(1 - p_j)} \qquad \text{for all }\ 1 \leq i <
  j \leq k.
\]
In other words, when all of the bipartite graphs between the parts $G[V_i, V_j]$
are significantly denser than the densest graph $G[V_i]$. In case $k = 2$, this
is depicted on the rightmost parts of Figure~\ref{fig:figure-A} and
Figure~\ref{fig:figure-alpha}.

\begin{proposition}\label{prop:chrom-union}
  With high probability
  \[
    \chi(\Gnp) = \big(1 + o(1)\big) \Big( \sum_{1 \le i \le k} \chi(G(\alpha_i
    n, p_i)) \Big)
  \]
  if and only if $p_{ij} \ge 1-\sqrt{(1-p_i)(1-p_j)}$ for all $1 \leq i < j \leq
  k$.
\end{proposition}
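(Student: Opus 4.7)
The plan is to translate the statement, via Theorem~\ref{thm:main-chromatic-num} applied both to $\Gnp$ and, in the single-block case, to each $G(\alpha_i n, p_i)$, into an equivalent assertion about $c^\star$. Writing $S := \sum_{i \in [k]} \alpha_i/c(p_i)$, the proposition reduces to showing that $c^\star = 1/S$ if and only if $p_{ij} \ge 1-\sqrt{(1-p_i)(1-p_j)}$ for all $i < j$. The inequality $c^\star \ge 1/S$ is immediate and unconditional: each axis vector $\bfm{v}_i := c(p_i)\be_i$ lies in $\cA$ because $g(\bfm{v}_i, I) = 0$ for every $I \subseteq [k]$, and their convex combination with weights $\alpha_i/(c(p_i) S)$ equals $\balpha/S$, a point of $\conv(\cA) \cap \R_{\geq 0}\balpha$ of norm $1/S$.

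For the direction ``$\Leftarrow$'' I would show that the assumption forces $\sum_i c_i/c(p_i) \le 1$ for every $\bfm{c} \in \cA$; by linearity this extends to $\conv(\cA)$, and specialising to $\bfm{c} = t\balpha$ gives $t \le 1/S$. The key algebraic step uses the identity $\sum_{i,j} c_i c_j \ln(1-p_i) = |\bfm{c}| \sum_i c_i \ln(1-p_i) = -2|\bfm{c}|\sum_i c_i/c(p_i)$ and its symmetric counterpart with $i \leftrightarrow j$, together with the assumption rewritten as $\ln(1-p_{ij}) \le \tfrac12(\ln(1-p_i) + \ln(1-p_j))$ (trivially an equality when $i = j$). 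Combining these yields
\begin{equation*}
\tfrac{1}{2}\sum_{i,j} c_i c_j \ln(1-p_{ij}) \le \tfrac{1}{4} \sum_{i,j} c_i c_j \bigl(\ln(1-p_i) + \ln(1-p_j)\bigr) = -|\bfm{c}| \sum_i c_i/c(p_i),
\end{equation*}
so that $g(\bfm{c},[k]) \ge 0$ collapses to $|\bfm{c}|\bigl(1 - \sum_i c_i/c(p_i)\bigr) \ge 0$. The clean factorisation of $|\bfm{c}|$ out of the two quadratic sums is really the nucleus of the argument.

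For ``$\Rightarrow$'' I would argue by contrapositive: assuming $p_{i_0 j_0} < 1-\sqrt{(1-p_{i_0})(1-p_{j_0})}$ for some pair $i_0 < j_0$, construct a point of $\conv(\cA) \cap \R_{\geq 0}\balpha$ of norm strictly greater than $1/S$. Restricting $\cA$ to the slice $\{c_\ell = 0 : \ell \notin \{i_0, j_0\}\}$ and invoking Section~\ref{sec:twoblock}, the restricted $\cA$ is strictly convex, and the axis chord from $c(p_{i_0})\be_{i_0}$ to $c(p_{j_0})\be_{j_0}$ lies strictly in its interior; hence the ray $\R_{\geq 0}(\alpha_{i_0}\be_{i_0} + \alpha_{j_0}\be_{j_0})$ exits $\cA$ at some $s_* > s_{\rm axis} := 1/(\alpha_{i_0}/c(p_{i_0}) + \alpha_{j_0}/c(p_{j_0}))$, with the quantitative gap $1/s_{\rm axis} - 1/s_* = \tfrac{\alpha_{i_0}\alpha_{j_0}}{\alpha_{i_0}+\alpha_{j_0}}\bigl(\ln(1-p_{i_0 j_0}) - \tfrac12\ln((1-p_{i_0})(1-p_{j_0}))\bigr) > 0$ falling out of the defining quadratic. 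Then $\bfm{y} := s_*(\alpha_{i_0}\be_{i_0} + \alpha_{j_0}\be_{j_0}) \in \cA$, combined convexly with the remaining axis vectors $\bfm{v}_\ell$ ($\ell \notin \{i_0, j_0\}$) in proportions making the combination parallel to $\balpha$, gives a point of $\conv(\cA)$ on $\R_{\geq 0}\balpha$ of norm $t = 1/\bigl(1/s_* + \sum_{\ell \neq i_0, j_0} \alpha_\ell/c(p_\ell)\bigr) > 1/S$, as desired. The main obstacle is identifying and justifying this `lift' from the two-dimensional witness to the full $k$-dimensional convex hull; once the 2D picture from Section~\ref{sec:twoblock} is in hand, the rest is bookkeeping.
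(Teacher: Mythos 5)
Your proposal is correct, and the core of it matches the paper's proof. Both proofs reduce the statement to $c^\star = 1/S$ where $S = \sum_i \alpha_i/c(p_i)$, observe unconditionally that $c^\star \ge 1/S$ via the convex combination of the axis vectors $c(p_i)\be_i \in \cA$, and, for the ``$\Leftarrow$'' direction, establish $\cA \subseteq \{\bfm{c}: \sum_i c_i/c(p_i) \le 1\}$. Your algebraic route to the last inclusion (the averaging/symmetrisation trick using $\ln(1-p_{ij}) \le \tfrac12(\ln(1-p_i)+\ln(1-p_j))$ and the factorisation $\sum_{i,j}c_ic_j\ln(1-p_i) = |\bfm{c}|\sum_ic_i\ln(1-p_i)$) is algebraically equivalent to the paper's reformulation of $g(\bfm{c},I)$ into a half-space term plus a nonpositive residual.

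The only genuine divergence is in the ``$\Rightarrow$'' (contrapositive) direction. The paper argues operationally: given a violating pair $\{i_0,j_0\}$, colour $V_h$ separately for $h \ne i_0,j_0$ and colour $G[V_{i_0}\cup V_{j_0}]$ as a two-block graph, invoking Section~\ref{sec:twoblock} to conclude the latter uses asymptotically fewer colours than $\chi(G[V_{i_0}])+\chi(G[V_{j_0}])$. You instead stay at the level of $c^\star$ and exhibit a point of $\conv(\cA)\cap\R_{\geq 0}\balpha$ of norm strictly greater than $1/S$ by finding a strictly interior point on the ray inside the 2D slice and then convexly combining with the remaining axis vectors. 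These are two presentations of the same underlying two-block observation; yours is arguably slightly cleaner because it closes the loop entirely within the optimisation language of Theorem~\ref{thm:main-chromatic-num} rather than re-invoking the colouring construction. One small imprecision: your displayed formula for $1/s_{\mathrm{axis}} - 1/s_*$ is exact only when the quadratic constraint $g(\cdot,\{i_0,j_0\}) = 0$ is the one binding at the exit point $s_*$; if one of the caps $c_{i_0}\le c(p_{i_0})$ or $c_{j_0}\le c(p_{j_0})$ binds first (the regimes $p_{i_0j_0} < 1-\sqrt{1-p_{i_0}}$ or $p_{i_0j_0} < 1-\sqrt{1-p_{j_0}}$ discussed at the end of Section~\ref{sec:twoblock}), the exact gap is different, though still positive. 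Since all you need is $s_* > s_{\mathrm{axis}}$, the argument goes through; just phrase the gap formula as a lower bound or restrict it to the sub-case where the quadratic binds.
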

\begin{proof}
  We first show that $p_{ij} \ge 1-\sqrt{(1-p_i)(1-p_j)}$ for all $1 \leq i < j \leq
  k$ implies the desired bound on the chromatic number. Recall, for a vector
  $\bfm{c} \in \R^k$ and $I \subseteq [k]$, the function $g(\bfm{c}, I)$ is
  defined as
  \[
    g(\bfm{c}, I) = \sum_{i \in I} c_i + \frac{1}{2} \sum_{i, j \in I} c_ic_j
    \ln(1 - p_{ij}).
  \]
  Note that we can reformulate this as
  \[
    g(\bfm{c}, I) = \Big(\sum_{i \in I} c_i \Big) \Big(1 - \sum_{i \in I}
    \frac{c_i}{c(p_i)}\Big) + \sum_{i \ne j \in I} c_i c_j \Big( \ln (1 -
    p_{ij}) + \frac{1}{c(p_i)} + \frac{1}{c(p_j)} \Big).
  \]
  By assumption of $p_{ij} \ge 1 - \sqrt{(1 - p_i)(1 - p_j)}$ we have that the
  term $\ln (1 - p_{ij}) + 1/c(p_i) + 1/c(p_j)$ is negative or zero for
  all $i, j \in [k]$. Consequently, $\cA \subseteq \cB \coloneqq \{\bfm{c} \in
  \R_{\ge 0}^k : 1 - \sum_{i \in [k]} \frac{c_i}{c(p_i)} \ge 0 \}$ and $\cB$ has
  as boundary a hyperplane and therefore is a convex set.

  For every $i \in [k]$, let
  \begin{equation}\label{eq:h-definition}
    \bfm{t_i} = c(p_i) \cdot \bfm{e}_i, \qquad h = \sum_{1 \leq j \leq k}
    \frac{\alpha_j}{|\bfm{t_j}|}, \qquad \text{and} \qquad \lambda_i =
    \frac{\alpha_i}{h |\bfm{t_i}|}
  \end{equation}
  Note that each $\lambda_i \in [0, 1]$ and $\sum_{1 \leq i \leq k} \lambda_i =
  1$. We claim that we can represent $\bfm{c^\star}$ as a convex combination of
  $\bfm{t_i}$'s like
  \[
    \bfm{c^\star} = \sum_{1 \leq i \leq k} \lambda_i \bfm{t_i}.
  \]
  Observe that, by definition
  \begin{equation}\label{eq:c-star-norm}
    c^\star = |\bfm{c^\star}| = \sum_{1 \leq i \leq k} \lambda_i |\bfm{t_i}| =
    \sum_{1 \leq i \leq k} \frac{\alpha_i}{h} = \frac{1}{h}.
  \end{equation}
 Each $\bfm{t_i} \in \cA \subseteq \cB$ is the intersection point of the
  hyperplane of $\cB$ with the corresponding axis. So, in fact, $\cA \subseteq
  \conv(\cA) = \cB$ and since $\bfm{c^\star}$ is a linear combination of the
  $\bfm{t_i}$'s it must lie on the boundary of $\cB$. That gives the upper bound
  on $\bfm{c^\star}$ and $\bfm{c^\star} \in \cB$ gives the lower bound. The rest
  now follows from the same strategy as in Theorem~\ref{thm:main-chromatic-num}
  and the fact that the number of different colors used is at most
  \[
    \sum_{1 \leq i \leq k} \frac{\alpha_i n}{(1 - \eps) |\bfm{t_i}| \ln n} +
    o\Big(\frac{n}{\ln n} \Big) \osref{\eqref{eq:h-definition}}= h \cdot
    \frac{n}{(1 - \eps)\ln n} + o \Big( \frac{n}{\ln n} \Big)
    \osref{\eqref{eq:c-star-norm}}= \frac{n}{c^\star \ln n} + o \Big(
    \frac{n}{\ln n} \Big).
  \]
  As for the other direction, whenever there is a $p_{ij} < 1 - \sqrt{(1 -
  p_i)(1 - p_j)}$ for a fixed $i \ne j \in [k]$ we can color $\Gnp$ in the
  following way. For every $h \in [k] \setminus \{i, j\}$ color each $V_h$
  separately with $\chi(G(\alpha_h n_h, p_h))$ colors. Then look at the graph
  induced by $V_i \cup V_j$. Clearly, $G[V_i \cup V_j]$ is distributed as the
  block graph $G(\alpha_i n_i + \alpha_j n_j, \balpha', P')$, where $\balpha' =
  (\alpha_i, \alpha_j)$ and
  $P' = \big(
    \begin{smallmatrix}
      p_{ii} & p_{ij} \\ p_{ji} & p_{jj}
    \end{smallmatrix}
  \big)$.
  Our observations from analysing the two-block case in
  Section~\ref{sec:twoblock} tell us that we can w.h.p.\ color this graph with
  asymptotically less colors than the sum of the chromatic numbers of the parts
  thus proving the proposition.
\end{proof}

\subsection{Convex set with homogeneous balanced partition}

The case where $\cA$ is convex can quickly turn out to be quite complicated.
Perhaps one of the cases worth mentioning is when the probability matrix $P$
contains only two different values, one for the diagonal and one for the
off-diagonal, and additionally $|V_1| = \dotsb = |V_k| = n/k$. So, for $P$ we
have
\[
  p_{ii} = p \quad \forall i \in [k] \qquad \text{and} \qquad p_{ij} = q \quad
  \forall i \ne j \in [k],
\]
with $p \ge q$. Then $\cA$ takes a convex shape since all the equations form
convex sets and the vector $\bfm{c^\star}$ must on the boundary of $\cA$. In
other words, $\bfm{c^\star} = (\frac{c^\star}{k}, \dotsc, \frac{c^\star}{k})$
where $c^\star = |\bfm{c^\star}|$ and, in particular, it must hold that
\[
  \sum_{i \in [k]} \frac{c^\star}{k} + \frac{1}{2} \sum_{i \in [k]} \Big(
  \frac{c^\star}{k} \Big)^2 \ln(1 - p) + \sum_{1 \leq i < j \leq k} \Big(
  \frac{c^\star}{k} \Big)^2 \ln(1 - q) \leq 0.
\]
By rearranging we get that $c^\star \leq -2 / \big( \frac{1}{k}\ln(1 - p) +
\frac{k-1}{k} \ln(1 - q) \big)$. Therefore, in case the previous is satisfied
with an equality, since $p \ge q$ and due to $\alpha_i = 1/k$, the equations
$g(\cdot, I)$ are automatically satisfied for all subsets of the indices $I
\subseteq [k]$, and hence $\bfm{c^\star}$ is maximal and in $\cA$. Applying
Theorem \ref{thm:main-chromatic-num}, we get
\[
  \chi(\Gnp) = \big(1 + o(1)\big) \frac{n}{2\ln n} \Big( -\frac{1}{k} \ln(1 -
  p) - \frac{k - 1}{k} \ln(1 - q) \Big).
\]

\bibliographystyle{abbrv}
\bibliography{references}

\end{document}